\newcommand*\MSC[1][1991]{\par\leavevmode\hbox{%
\textit{\,\,\,\,\, #1 Mathematical subject classification:\ }}}
\newcommand\blfootnote[1]{%
  \begingroup
  \renewcommand\thefootnote{}\footnote{#1}%
  \addtocounter{footnote}{-1}%
  \endgroup
}
\def \de {\partial}
\def \N {\mathbb{N}}
\def \phi {\varphi}
\def \RN {\mathbb{R}^N}
\def \R {\mathbb{R}}
\def \K {\mathscr{K}}
\def \G{\Gamma}
\newcommand{\sA}{\mathscr A}
\newcommand{\Rn}{\mathbb R^n}
\newcommand{\Rm}{\mathbb R^m}
\newcommand{\Hn}{\mathbb H^n}
\newcommand{\p}{\partial}
\newcommand{\bG}{\mathbb {G}}
\newcommand{\bg}{\mathfrak g}
\newcommand{\la}{\lambda}
\numberwithin{equation}{section}
\newcommand{\beq}{\begin{equation}}
\newcommand{\bea}[1]{\begin{array}{#1} }
\newcommand{\eeq}{ \end{equation}}
\newcommand{\ea}{ \end{array}}
\newcommand{\ve}{\varepsilon}
\newcommand{\I}{\mathscr I_{HL}}
\DeclareMathOperator{\csch}{csch}
\DeclareMathOperator{\cotanh}{cotanh}
\newtheorem{theorem}{Theorem}[section]
\newtheorem{lemma}[theorem]{Lemma}
\newtheorem{proposition}[theorem]{Proposition}
\newtheorem{remark}[theorem]{Remark}
\newtheorem{example}[theorem]{Example}
\DeclareMathOperator{\sech}{sech}
\numberwithin{equation}{section}
\begin{document}

\title[Heat kernels for a class of hybrid etc.]{Heat kernels for a class of hybrid evolution equations}

{\blfootnote{\MSC[2020]{35K08, 35R03, 53C17}}}
\keywords{Heat kernel, CR extension problem, Cauchy problem}

\date{}

\begin{abstract}
The aim of this paper is to construct (explicit) heat kernels for some \emph{hybrid} evolution equations which arise in physics, conformal geometry and subelliptic PDEs. Hybrid means that the relevant partial differential operator appears in the form $\mathscr L_1 + \mathscr L_2 - \p_t$, but the variables cannot be decoupled. As a consequence, the relative heat kernel cannot be obtained as the product of the heat kernels of the operators $\mathscr L_1  - \p_t$ and $\mathscr L_2 - \p_t$. Our approach is new and ultimately rests on the generalised Ornstein-Uhlenbeck operators in the opening of H\"ormander's 1967 groundbreaking paper on hypoellipticity.
\end{abstract}

\author{Nicola Garofalo}

\address{Dipartimento d'Ingegneria Civile e Ambientale (DICEA)\\ Universit\`a di Padova\\ Via Marzolo, 9 - 35131 Padova,  Italy}
\vskip 0.2in
\email{nicola.garofalo@unipd.it}

\thanks{Both authors are supported in part by a Progetto SID: ``Non-local Sobolev and isoperimetric inequalities", University of Padova, 2019.}

\author{Giulio Tralli}
\address{Dipartimento d'Ingegneria Civile e Ambientale (DICEA)\\ Universit\`a di Padova\\ Via Marzolo, 9 - 35131 Padova,  Italy}
\vskip 0.2in
\email{giulio.tralli@unipd.it}

\maketitle

\tableofcontents

\section{Introduction}\label{S:intro}

Consider a second order partial differential operator $\mathscr L$ and the heat equation $\mathscr L u - \p_t u =0$ associated with it. Following a well-established tradition by heat kernel we mean a function $p(x,\xi,t)$ such that for any $\xi$ the function $p(\cdot,\xi,\cdot)$ is a solution of the heat equation, and $p(x,\cdot,t)\longrightarrow \delta_x$ in the distributional sense as $t\to 0^+$. 
The aim of this paper is to construct explicit heat kernels for some \emph{hybrid} evolution equations which arise in diverse frameworks such as e.g. sub-Riemannian geometry and problems from the applied sciences that are modelled by some classes of subelliptic equations. By hybrid we mean that the relevant partial differential operator appears in the form $\mathscr L_1 + \mathscr L_2 - \p_t$, but the variables cannot be decoupled. Consequently, the relative heat kernel cannot be simply obtained as the product of the heat kernels of the operators $\mathscr L_1  - \p_t$ and $\mathscr L_2 - \p_t$. Our approach is completely self-contained, elementary, and it is purely based on PDE methods whose final objective is to emphasise the so far unexplored connection of the relevant class of hybrid equations with the generalised operators of Ornstein-Uhlenbeck type in the opening of H\"ormander's groundbreaking 1967 work \cite{Ho}. It is worth mentioning here that as a by-product we obtain a simple proof of the well-known (non-hybrid) cases of the heat operator in a stratified nilpotent Lie group of step two and of the Baouendi-Grushin operator (see respectively Sections \ref{S:CR} and \ref{S:met} below). 

To motivate our results we next discuss some prototypical  examples which fall within the scope of our approach. We begin with an example from conformal CR geometry. In recent years the study of the so-called \emph{extension operators} has received increasing attention from workers in analysis and geometry especially in connection with certain conformally invariant nonlocal operators. A typical situation of interest is represented by the Heisenberg group $\Hn \cong \mathbb C^{n}\times\mathbb R$ with real coordinates $(z,\sigma)$ \footnote{we explicitly mention here that traditionally the letter $t$ is reserved for the vertical variable in $\Hn$. However, since we want to indicate the time variable with $t$, we have opted for the notation $\sigma$. The letter $z$ instead indicates the horizontal variables in $\R^{2n}$.} and horizontal Laplacian 
\begin{equation}\label{slHn}
\mathscr L = \Delta_z + \frac{|z|^2}4 \p_{\sigma\sigma}  + \sum_{i=1}^n \p_\sigma \big(z_i \p_{z_{n+i}} - z_{n+i} \p_{z_i}\big).
\end{equation}
In their seminal paper \cite{FGMT} Frank et al. have introduced the following extension problem:
given a function $u\in C^\infty_0(\Hn)$, find a function $U\in C^\infty(\Hn\times  \R^+_y)$ that solves the Dirichlet problem
\begin{equation}\label{fgmt}
\begin{cases}
\p_{yy} U  + \frac{1-2s}y \p_y U + \frac{y^2}4 \p_{\sigma\sigma} U + \mathscr L U  = 0,\ \ \ \ \ \text{in}\ \Hn\times  \R^+_y,
\\
U(g,0) = u(g),
\end{cases}
\end{equation} 
where the fractional parameter $s\in (0,1)$. The term $\frac{y^2}4 \p_{\sigma\sigma} U$ has a geometric meaning whose explanation comes from the equivalence between \eqref{fgmt} and the \emph{scattering eigenvalue problem} in complex hyperbolic space. 
The second order time-independent PDE in \eqref{fgmt} is a notable example of the type of \emph{hybrid} equations that are the object of interest of the present paper. To clarify this aspect we observe that if we formally think of $w$ as a generic point in the space with fractal dimension  $\R^{2(1-s)}$, and we let $y = |w|$ denote its ``distance" to the origin, then the PDE in \eqref{fgmt} can be interpreted as the action of the differential operator $\Delta_w + \frac{|w|^2}{4} \p_{\sigma\sigma} + \mathscr L$
on functions having spherical symmetry in $w$. If we consider the heat equation associated with such operator,
\begin{equation}\label{hybHn}
\Delta_w U+ \frac{|w|^2}{4} \p_{\sigma\sigma} + \mathscr L U - \p_t U = 0,
\end{equation}
it is immediate to recognise that in such equation the variables $(w,\sigma)\in \R^{2(1-s)}\times \R$ and $g=(z,\sigma)\in\Hn$ cannot be decoupled since the variable $\sigma$ appears in both the limiting operators $\Delta_w + \frac{|w|^2}{4} \p_{\sigma\sigma} - \p_t$ and $\mathscr L-\p_t$ (see \eqref{slHn}). In Section \ref{S:CR} we will show that the heat kernel with pole at the origin associated with \eqref{hybHn} is given by
\begin{align}\label{parBfsk1}
\mathfrak{q}_{(s)}((z,\sigma),t,y) & =  \frac{2}{(4\pi t)^{\frac{m}2 + 2-s}} \int_{\R} e^{- \frac it \sigma\la }   \left(\frac{|\la|}{\sinh |\la|}\right)^{\frac m2+1-s} e^{-\frac{|z|^2 +y^2}{4t}\frac{|\la|}{\tanh |\la|}} d\la.
\end{align}
We emphasise that \eqref{fgmt} is dramatically different from the  extension problem \`a la Caffarelli-Silvestre 
\begin{equation}\label{CS}
\begin{cases}
\p_{yy} U  + \frac{1-2s}y \p_y U  + \mathscr L U  = 0,\ \ \ \ \ \text{in}\ \Hn\times  \R^+_y,
\\
U(g,0) = u(g),
\end{cases}
\end{equation} 
in which the geometric term $\frac{y^2}4 \p_{\sigma\sigma} U $ is missing. The  evolution PDE associated with \eqref{CS} is 
\begin{equation*}
\Delta_w U + \mathscr L U - \p_t U = 0,
\end{equation*}
and it should be clear to the reader that this is not of hybrid type since its fundamental solution 
\[
q^{(s)}((z,\sigma),t,y) = (4\pi t)^{-(1-s)} e^{-\frac{y^2}{4t}} p((z,\sigma),t)
\]
is indeed the product of the fundamental solutions of the two heat operators $\Delta_w  - \p_t$ and $\mathscr L  - \p_t$ (the reader should note that we have used a superscript $(s)$ to distinguish such heat kernel from that in \eqref{parBfsk1}, for which we have used a subscript $(s)$).
Formula \eqref{parBfsk1} (see also the more general case treated in Theorem \ref{T:ext0} below) plays a critical role in the analysis of conformal properties of a certain pseudodifferential operator $\mathscr L_s$ which arises as the Dirichlet-to-Neumann map of \eqref{fgmt}, and we refer the interested reader to the works \cite{RTaim}, \cite{MOZ}, \cite{RT}, \cite{GTfeel}, \cite{GTinter} for more insights into this aspect.

Another significant model of the class of equations encompassed by the present paper is the following:
\begin{equation}\label{one}
\Delta_w f+ \frac{|w|^2}{4} \Delta_\sigma f + \langle w,\nabla_y f\rangle  -  \p_t f = 0,
\end{equation}  
where $w, y \in \R^n$, $\sigma \in \R^k$ and $t>0$. The equation \eqref{one} is a hybrid between the time-dependent Baouendi-Grushin operator in $\Rn\times \R^k\times \R^+_t$, $\Delta_w + \frac{|w|^2}{4} \Delta_\sigma - \p_t$, and the famous degenerate Kolmogorov operator in $\R^{2n}\times \R^+_t$, $\Delta_w + \langle w,\nabla_y \rangle  -  \p_t$. 
There exists no available treatment of \eqref{one} in the literature, but our approach produces the following explicit heat kernel (with pole at a generic point $(w_0,\sigma_0,y_0)\in \R^n\times\R^k\times\R^n$)
\begin{align}\label{two} 
& h((w,\sigma,y),(w_0,\sigma_0,y_0),t)=\\
&=(4\pi)^{-n}\int_{\R^k} e^{2\pi i \left\langle \sigma-\sigma_0,\lambda\right\rangle}e^{-\frac{\pi|\lambda|}{2}(|w|^2-|w_0|^2 + 2nt )} (\det tK_\lambda(t))^{-1/2} \times\notag\\
&\times \exp\bigg\{-\frac{1}{4} \langle (tK_\lambda(t))^{-1}\begin{pmatrix} w_0-e^{-2\pi t|\lambda|}w \\ y_0-y- \frac{1-e^{-2\pi t|\lambda|}}{2\pi|\lambda|}w\end{pmatrix},\begin{pmatrix} w_0-e^{-2\pi t|\lambda|}w \\ y_0-y- \frac{1-e^{-2\pi t|\lambda|}}{2\pi|\lambda|}w\end{pmatrix}\rangle\bigg\} 
d\lambda\notag
\end{align}
where
$$
tK_\lambda(t)=e^{-2\pi t|\lambda|}\begin{pmatrix} \frac{\sinh(2\pi t|\lambda|)}{2\pi|\lambda|}I_n & \frac{\cosh(2\pi t|\lambda|)-1}{4\pi^2|\lambda|^2}I_n\\ \frac{\cosh(2\pi t|\lambda|)-1}{4\pi^2|\lambda|^2}I_n & \frac{e^{2\pi t|\lambda|}(2\pi t |\lambda|-\sinh(2\pi t |\lambda|))+(\cosh(2\pi t |\lambda|)-1)(e^{2\pi t|\lambda|}-1)}{8\pi^3|\lambda|^3}I_n\end{pmatrix}.
$$
Formula \eqref{two}  is a special case of the more general Theorem \ref{T:BK} below, to which we refer the reader.

We now briefly discuss the organisation of the paper. In Section \ref{S:hor} we recall the class \eqref{A0} of generalised Ornstein-Uhlenbeck operators in the opening of H\"ormander's cited paper \cite{Ho}, and for completeness provide a short proof of Proposition \ref{P:hor} since this result constitutes the backbone of the present work. Section \ref{S:met} introduces the hybrid class \eqref{uno}, of which the equation \eqref{one} discussed above is a prototypical  representative. Besides its own interest, such section is instrumental to the rest of the paper. In the subsection \ref{S:ho} we solve the Cauchy problem \eqref{ho1} for a generalised harmonic oscillator. The main result is Proposition \ref{P:Pippo} that establishes a generalisation of the classical formula of Mehler. In Section \ref{S:BGfs} we use this result to derive the heat kernel for the Baouendi-Grushin operator, see 
\eqref{parBfs} in Theorem \ref{T:BGpar}. In Theorem \ref{T:BK} we finally construct the heat kernel for the class of hybrid equations in \eqref{uno}. Section \ref{S:CR} represents the more geometric part of the paper. There we construct the heat kernel for the conformal extension problem \eqref{fgmtG}. The latter represents a time-dependent generalisation to arbitrary groups of Heisenberg type of the above discussed conformal extension problem \eqref{fgmt} from \cite{FGMT}. The main result of the section is Theorem \ref{T:ext0}. To prove it we follow a pattern similar to that in Section \ref{S:met}. We first construct the heat kernel for a generalised harmonic oscillator with a complex drift. This step serves as a building block in the proof of the main Theorem  \ref{T:ext0}. In the process, and as a by-poduct of our approach, we also provide a new elementary proof of the famous formula of Hulanicki-Gaveau-Cygan for the  heat kernel on a Carnot group of step two, see Theorem \ref{T:heat}.


\section{The generalised Ornstein-Uhlenbeck operators of H\"ormander}\label{S:hor}

In this section we recall a well-known explicit heat kernel that constitutes the essential ingredient of the present work. Consider the class of differential equations in $\RN\times (0,\infty)$,
\begin{equation}\label{A0}
\K u = \mathscr A u - \p_t u  \overset{def}{=} \operatorname{tr}(Q \nabla^2 u) +
\langle Bz,\nabla u\rangle  - \p_t u  = 0.
\end{equation}
Here, the $N\times N$ matrices $Q$ and $B$ have real, constant coefficients, and moreover $Q = Q^\star \ge 0$. A basic feature of the operator $\K$ is the invariance with respect to the following non-Abelian group law in $\R^{N+1}$
$$
(z,s)\circ (\zeta,t) = (\zeta+ e^{-tB}z,s+t),
$$
see \cite{LaPo}. We emphasise that the evolution equation $\K u =\sA u  - \p_t u = 0$
encompasses operators that are very different in nature. Besides of course the classical heat equation ($Q = I_N$ and $B = O_N$), it contains
the Ornstein-Uhlenbeck equation $\Delta_z u -
\langle z,\nabla_z u\rangle - \p_t u = 0$ in \cite{OU} ($Q = I_N$ and $B = - I_N$), but also the very degenerate
equation of Kolmogorov from the kinetic theory of
gases $\Delta_v u + \langle v,\nabla_x u\rangle -\p_t u = 0$ in $\R^{2n}\times(0,\infty)$ , see \cite{Kol} ($Q = \begin{pmatrix} I_n & O_n\\ O_n & O_n\end{pmatrix}$ and $B = \begin{pmatrix} O_n & O_n\\I_n & O_n\end{pmatrix}$), as well as the degenerate Ornstein-Uhlenbeck equation $\Delta_v u -\langle v,\nabla_v u\rangle + \langle v,\nabla_x u\rangle -\p_t u = 0$ in $\R^{2n}\times(0,\infty)$ which arises in the Smoluchowski-Kramers approximation of Brownian motion with friction, see \cite{Bri} ($Q = \begin{pmatrix} I_n & O_n\\ O_n & O_n\end{pmatrix}$ and $B = \begin{pmatrix} -I_n & O_n\\I_n & O_n\end{pmatrix}$).

In \cite{Ho} H\"ormander proved that \eqref{A0} is hypoelliptic if and
only if its \emph{covariance matrix} satisfies the following Kalman condition
for one (and therefore every) $t>0$
\begin{equation}\label{Kt}
K(t) = \frac 1t \int_0^t e^{sB} Q e^{s B^\star} ds\ >\ 0.
\end{equation}
The hypothesis \eqref{Kt} will henceforth be assumed in this section. Under such assumption we note that $t\to t K(t)$ is strictly increasing in the sense of quadratic forms: one has in fact for any $t>t_0>0$
\begin{align*}
tK(t)-t_0K(t_0)&= \int_{t_0}^t e^{sB} Q e^{s B^\star} ds = e^{-t_0 B}\left(\int_{0}^{t-t_0} e^{\sigma B} Q e^{\sigma B^\star} d\sigma\right) e^{-t_0 B^\star}\\
&= e^{-t_0 B} (t-t_0)K(t-t_0) e^{-t_0 B^\star} >0.
\end{align*}
It follows that 
\begin{equation}\label{strict}
t \mapsto (tK(t))^{-1}\,\,\,\mbox{ is strictly decreasing.}
\end{equation}
Therefore, the matrix
$$
K^{-1}_\infty=\underset{ t\to \infty}{ \lim }\left( tK(t) \right)^{-1}
$$
is well-defined, and of course it is symmetric and nonnegative definite. Formally, $K^{-1}_\infty$ is the inverse of the matrix $\int_0^\infty e^{sB} Q e^{s B^\star} ds$, but it is well-known that the latter is well-defined if and only if all the eigenvalues of $B$ have strictly negative real part (see, e.g., \cite[Proposition 2.3]{BGTms}). On the other hand $K^{-1}_\infty$ is well-defined for any choice of $Q, B$ satisfying \eqref{Kt}, even if it possibly has a non-trivial kernel.

To introduce the main result of this section we next recall the time-dependent intertwined pseudo-distance
\[
m_t(z,\zeta)  = \sqrt{\langle K(t)^{-1}(\zeta-e^{tB} z ),\zeta-e^{tB}
z \rangle},\ \ \ \ \ \ \ t>0.
\]The behaviour for large $t$ of $tK(t)$ and $m_t(\cdot,\cdot)$ has been instrumental in our previous works \cite{GThls, GTiso} in establishing several functional inequalities related to the differential operator $\mathscr A$. 

\begin{proposition}\label{P:hor}
The heat kernel of \eqref{A0} is given by
\begin{equation}\label{q}
p(z,\zeta,t) = \frac{(4\pi)^{-N/2}}{(\det(t K(t)))^{1/2}} \exp\left( -
\frac{m_t(z,\zeta)^2}{4t}\right).
\end{equation}
More precisely, for any $f\in C(\RN)$ such that
\begin{equation}\label{growthallowed}
e^{-\frac{1}{4}\left\langle K^{-1}_\infty z,z\right\rangle} f(z)\in L^{\infty}\left(\RN\right),
\end{equation}
the function
\begin{equation}\label{ptkolmo}
u(z,t) = P_t f(z)=\int_{\R^N} p(z,\zeta,t) f(\zeta) d\zeta
\end{equation}
solves the Cauchy problem $\K u=  \sA u -\p_t u = 0$ in $\RN\times (0,\infty)$, $u(z,0) = f(z)$.
\end{proposition}

\begin{proof} The proof of \eqref{q} is known and fairly elementary. Denoting by $\xi$ the dual variable of $z$, and letting $\hat u(\xi,t) = \int_{\RN} e^{-2\pi i \langle \xi,z\rangle} u(z,t) dz$, then on the Fourier transform side the Cauchy problem reduces to solving 
\begin{equation}\label{cpFT}
\begin{cases}
\de_t \hat u  + \langle B^\star \xi, \nabla_\xi \hat u\rangle + \left(4 \pi^2
\langle Q\xi,\xi\rangle + \operatorname{tr} B\right) \hat u  = 0\ \ \ \ \ \
\text{in}\ \R^N\times (0,\infty),
\\
\hat u(\xi,0) = \hat{f}(\xi).
\end{cases}
\end{equation}
Now \eqref{cpFT} can be easily solved via the method of characteristics. Fixing $(\xi,t)\in \R^N\times (0,\infty)$, one considers $g(s)  = \hat u(e^{sB^\star} \xi,s+t)$. This function, in turn, solves
\[
g'(s) + \left(4 \pi^2 \langle e^{sB}Q e^{sB^\star}\xi,\xi\rangle +
\operatorname{tr} B\right) g(s) = 0,\ \ \ \ \ \ \ g(0) = \hat u(\xi,t).
\]
Recalling \eqref{Kt}, we see that $g(s)$ is given by
\[
g(s) = \hat u(\xi,t) e^{-4 \pi^2 s \langle K(s)\xi,\xi\rangle} e^{- s \operatorname{tr} B}.
\]
Since $g(-t)=\hat{f}(e^{-tB^\star} \xi)$ and
$K(-t)=e^{-tB}K(t)e^{-tB^*}$, this implies the remarkable formula
\begin{equation}\label{beautyy}
\hat u(\xi,t) = e^{-t \operatorname{tr} B}  e^{- 4t \pi^2
\langle K(t)e^{-tB^*}\xi,e^{-tB^*}\xi\rangle} \hat{f}(e^{-tB^\star} \xi).
\end{equation}
The representation formula \eqref{ptkolmo} follows from \eqref{beautyy} by taking the inverse Fourier transform and
straightforward manipulations. 

It is proved in \cite[Theorem 1.4]{DFP} that, if $|f(z)|\leq C e^{c|z|^2}$ for some positive constants $c, C$, then the function $u=P_t f$ is solution to  $\K u= 0$ in a suitable strip $\R^N\times (0,T)$ and it attains the initial datum $f$. We need to prove that, given $f$ satisfying \eqref{growthallowed}, the function $u=P_t f$ is well defined for every $t>0$ and it defines in fact a solution of the Cauchy problem in the whole $\R^N\times (0,\infty)$. To see this, for any $(z,t)\in \R^N\times (0,\infty)$ we write
\begin{align*}
&u(z,t)=\int_{\R^N}\frac{(4\pi)^{-N/2}}{(\det(t K(t)))^{1/2}} \exp\left( -
\frac{m_t(z,\zeta)^2}{4t}\right) f(\zeta) d\zeta \\
&=\exp^{-\frac{1}{4}\left\langle \left( tK(t) \right)^{-1}e^{tB}z,e^{tB}z\right\rangle}\int_{\R^N}\frac{(4\pi)^{-N/2}}{(\det(t K(t)))^{1/2}} \exp^{\frac{1}{2}\left\langle \left( tK(t) \right)^{-1} \zeta,e^{tB}z\right\rangle}\exp^{-\frac{1}{4}\left\langle \left( tK(t) \right)^{-1} \zeta,\zeta\right\rangle} f(\zeta) d\zeta.
\end{align*}
Exploiting \eqref{growthallowed} we thus infer
\begin{align}\label{boundviainfty}
\left|u(z,t)\right|\leq & \sup_\zeta \left|e^{-\frac{1}{4}\left\langle K^{-1}_\infty \zeta,\zeta\right\rangle} f(\zeta)\right|\exp^{-\frac{1}{4}\left\langle \left( tK(t) \right)^{-1}e^{tB}z,e^{tB}z\right\rangle} \times \\
&\times \int_{\R^N}\frac{(4\pi)^{-N/2}}{(\det(t K(t)))^{1/2}} \exp^{\frac{1}{2}\left\langle \left( tK(t) \right)^{-1} \zeta,e^{tB}z\right\rangle}\exp^{-\frac{1}{4}\left\langle \left( tK(t) \right)^{-1} \zeta,\zeta\right\rangle} e^{\frac{1}{4}\left\langle K^{-1}_\infty \zeta,\zeta\right\rangle}d\zeta.\notag
\end{align}
Property \eqref{strict} ensures, for every fixed $t>0$, the existence of $\mu_t>0$ such that
$$
\left\langle \left(\left( tK(t) \right)^{-1}- K^{-1}_\infty\right)\eta,\eta\right\rangle \geq \mu_t |\eta|^2 \quad\forall\,\eta\in\R^N.
$$
Inserting this information in \eqref{boundviainfty} we deduce that
\begin{align*}
\left|u(z,t)\right|\leq & \sup_\zeta \left|e^{-\frac{1}{4}\left\langle K^{-1}_\infty \zeta,\zeta\right\rangle} f(\zeta)\right|\exp^{-\frac{1}{4}\left\langle \left( tK(t) \right)^{-1}e^{tB}z,e^{tB}z\right\rangle} \times \\
&\times \int_{\R^N}\frac{(4\pi)^{-N/2}}{(\det(t K(t)))^{1/2}} \exp^{\frac{1}{2}\left\langle \left( tK(t) \right)^{-1} \zeta,e^{tB}z\right\rangle}\exp^{-\frac{1}{4}\mu_t|\zeta|^2} d\zeta\\
=& \sup_\zeta \left|e^{-\frac{1}{4}\left\langle K^{-1}_\infty \zeta,\zeta\right\rangle} f(\zeta)\right|\exp^{\frac{1}{4}\left\langle \left(\frac{1}{\mu_t}\left( tK(t) \right)^{-1} - I_N\right) \left( tK(t) \right)^{-1}e^{tB}z,e^{tB}z\right\rangle} \times \\
&\times \int_{\R^N}\frac{(4\pi)^{-N/2}}{(\det(t K(t)))^{1/2}} \exp^{-\frac{1}{4}\left|\sqrt{\mu_t}\zeta - \frac{1}{\sqrt{\mu_t}} \left( tK(t) \right)^{-1} e^{tB}z \right|^2} d\zeta<\infty.
\end{align*}
By arguing in a similar way one can compute the derivatives of $u$ at any point $(z,t)\in \R^N\times (0,\infty)$ by exchanging the order of derivation and integration: this shows that $u$ is solution and completes the proof of the theorem.

\end{proof}

It is easy to see that for the heat equation ($Q = I_N$ and $B = O_N$) the matrix $tK(t)= t I_N$ and therefore $K_\infty^{-1}=O_N$. The matrix $K_\infty^{-1}$ is the null matrix also for the Kolmogorov equation ($Q = \begin{pmatrix} I_n & O_n\\ O_n & O_n\end{pmatrix}$ and $B = \begin{pmatrix} O_n & O_n\\I_n & O_n\end{pmatrix}$) since $tK(t)= \begin{pmatrix} tI_n & \frac{t^2}2 I_n\\ \frac{t^2}2 I_n & \frac{t^3}3 I_n\end{pmatrix}$ and thus $\left(tK(t)\right)^{-1}= \begin{pmatrix} \frac 4t I_n & -\frac{6}{t^2} I_n\\ -\frac{6}{t^2} I_n & \frac{12}{t^3} I_n\end{pmatrix}$. Instead, for the Ornstein-Uhlenbeck equation ($Q = I_N$ and $B = - I_N$) the matrix $tK(t)= \frac{1}{2}(1-e^{-2t})I_N$ and therefore $K_\infty^{-1}=2 I_N$. In the following two examples we discuss two situations that will be useful in the remainder of the present work. Henceforth in this paper we indicate by $j:\R\to \R$ the real analytic function defined by 
\begin{equation}\label{j}
j(\tau)=\frac{\tau}{\sinh(\tau)}.
\end{equation}
Given a $N\times N$ matrix $C$ with real coefficients, the notation $j(C)$ will denote the matrix identified by the power series of the function $j$. It is worth noting that $j(C)$ is invertible with inverse matrix given by $j(C)^{-1}=\sum_{k=0}^{\infty}\frac{C^{2k}}{(2k+1)!}$. Similar interpretation for the matrix $\cosh C$.
\begin{example}[Ornstein-Uhlenbeck with a possibly degenerate drift]\label{ex1} Let $D$ be a real $N\times N$ matrix, that is symmetric and nonnegative definite, and consider the operator obtained by \eqref{A0} with the choice
$$
Q = I_N \qquad\mbox{ and }\qquad B = - 2 D.
$$
Then
\begin{equation}\label{kappameno1ex1}
K_\infty^{-1}  = 4D.
\end{equation}
\begin{proof}
With $Q$ and $B$ given above we have $tK(t)=\int_0^t e^{-4sD} ds$. Keeping in mind that $e^\tau=\sum_{k=0}^{\infty}\frac{\tau^k}{k!}$ and $\sinh(\tau)=\sum_{k=0}^{\infty}\frac{\tau^{2k+1}}{(2k+1)!}$, we now make the following observation
\begin{align*}
e^{tD}\left(tK(t)\right)e^{tD}&=\int_0^t e^{(2t-4s)D} ds=\frac{1}{4}\int_{-2t}^{2t}e^{\tau D} d\tau=\frac{1}{4}\sum_{k=0}^{\infty}\frac{D^{2k}}{(2k)!}\int_{-2t}^{2t}\tau^{2k} d\tau\\
&=t\sum_{k=0}^{\infty}\frac{(2tD)^{2k}}{(2k+1)!}=t\left(j(2tD)\right)^{-1},
\end{align*}
where $j$ is as in \eqref{j}. Since the previous identity can be rewritten as follows
$$
tK(t)=te^{-t D}\left(j(2tD)\right)^{-1}e^{-t D},
$$
we then obtain
\begin{equation}\label{formulainvK}
\left(tK(t)\right)^{-1}=\frac{1}{t}e^{t D}j(2tD)e^{t D}.
\end{equation}
We notice that $\left(tK(t)\right)^{-1}$ and $D$ diagonalize simultaneously, and the function $\frac{e^{2\mu t}}{t}j(2\mu t)$ converges to $4\mu$ as $t\to \infty$ for any $\mu\geq 0$. Then, from \eqref{formulainvK} we obtain 
\[
K_\infty^{-1}=\underset{ t\to \infty}{ \lim }\left( tK(t) \right)^{-1} = 4D,
\]
which proves \eqref{kappameno1ex1}.
\end{proof}
\end{example}

The Smoluchowski-Kramers equation ($Q = \begin{pmatrix} I_n & O_n\\ O_n & O_n\end{pmatrix}$ and $B = \begin{pmatrix} -I_n & O_n\\I_n & O_n\end{pmatrix}$) mentioned in the opening of the section falls within the class considered in the following example.

\begin{example}[Degenerate Ornstein-Uhlenbeck]\label{ex2} Let $N=n+n_1$ with $n,n_1\in N$ and $n_1\leq n$. Consider a symmetric and positive definite $n\times n$ matrix $D_1$ and a $n_1\times n$ matrix $B_0$ with rank $n_1$. For the operator $\K$ in \eqref{A0} corresponding to the choice 
$$
Q= \begin{pmatrix} I_n & O_{n\times n_1}\\ O_{n_1\times n}& O_{n_1\times n_1}\end{pmatrix},\ \ \ \ \ \ \ \hspace{0.5cm}
B= \begin{pmatrix} - 2D_1 & O_{n\times n_1}\\ B_0 & O_{n_1\times n_1}\end{pmatrix},
$$
one has
\begin{equation}\label{kappameno1ex2}
K_\infty^{-1}= \begin{pmatrix} 4D_1 & O_{n\times n_1}\\ O_{n_1\times n} & O_{n_1\times n_1}\end{pmatrix}.
\end{equation}
\begin{proof}
As a first step we observe that with $Q$ and $B$ given above, where the lower indices indicate the dimensions of the various zero matrices, the Kalman condition \eqref{Kt} is satisfied. Since the kernel of $Q$ is $n_1$-dimensional, the operator $\operatorname{tr}(Q \nabla^2 ) + \langle Bz,\nabla \rangle$ is degenerate-elliptic. Denoting $V_i=\p_{z_i}$ for $i\in\{1,\ldots,n\}$ and $V_0 = \langle Bz,\nabla \rangle$ one has
$$
\operatorname{tr}(Q \nabla^2 ) + \langle Bz,\nabla \rangle=\sum_{i=1}^n V_i^2 + V_0.
$$
Moreover, the commutator between $V_i$ and $V_0$ is given by
$$
[V_i,V_0]= \left( B^*\nabla\right)_i=-2\sum_{j=1}^{n} \left(D_1\right)_{ij}\p_{z_j} + \sum_{j=1}^{n_1} \left(B_0\right)_{j i}\p_{z_{n+j}}.
$$
From this relation and the fact that $\mathrm{Im}\left(B_0\right)=\R^{n_1}$ we deduce that the vector fields $V_0, V_1,...V_n$ satisfy H\"ormander's finite rank condition on  the Lie algebra in \cite{Ho} and therefore the operator $\K$ is  hypoelliptic. As we have recalled, this is equivalent to saying that 
\eqref{Kt} hold.

Next, we compute
$$
e^{t B}= \begin{pmatrix} e^{- 2 tD_1} & O_{n\times n_1}\\ \frac{1}{2}B_0 D_1^{-1} \left( I_n-e^{- 2 t D_1}\right) & I_{n_1}\end{pmatrix},
$$
and
$$t K(t) = \begin{pmatrix} K_{11}(t) & K_{12}(t)\\ K^*_{12}(t) & K_{22}(t)\end{pmatrix},$$
where
\begin{align*}
K_{11}(t)&=\frac{1}{4}D_1^{-1} \left( I_n-e^{- 4 t D_1}\right)\\
K_{12}(t)&=\frac{1}{8}D_1^{-2} \left( I_n-e^{- 2 t D_1}\right)^2 B_0^*\\
K_{22}(t)&=\frac{1}{4} B_0 D_1^{-2} \left( tI_n + \frac{1}{4}D_1^{-1} \left(-3 I_n +4 e^{- 2 t D_1}-e^{- 4 t D_1}\right) \right) B_0^*.
\end{align*}
By means of known formulas for the inverse of a partitioned matrix (see, e.g., \cite[Section 0.7.3]{HJ}), we obtain 
\begin{align*}
\left(t K(t)\right)^{-1}=&\begin{pmatrix} \left(K_{11}(t) - K_{12}(t) K^{-1}_{22}(t)K^*_{12}(t)\right)^{-1}  & O_{n\times n_1}\\ O_{n_1\times n} & \left(K_{22}(t) - K^*_{12}(t) K^{-1}_{11}(t)K_{12}(t)\right)^{-1}\end{pmatrix}\times\\
&\times\begin{pmatrix} I_n & -K_{12}(t)K^{-1}_{22}(t)\\ -K^*_{12}(t)K^{-1}_{11}(t) & I_{n_1}\end{pmatrix}.
\end{align*}
We now notice that, as $t\to \infty$, we have the limiting relations 
\[
K^{-1}_{11}(t)\to 4D_1,\ \ \ K_{22}(t)=\frac{t}{4}B_0 D_1^{-2}B_0^* + O(1),\ \ \  K_{12}(t) = O(1).
\]
Since $B_0 D_1^{-2}B_0^*$ is invertible, we  conclude that
\[
K_\infty^{-1}=\underset{ t\to \infty}{ \lim }\left( tK(t) \right)^{-1} = \begin{pmatrix} 4D_1 & O_{n\times n_1}\\ O_{n_1\times n} & O_{n_1\times n_1}\end{pmatrix},
\]
which establishes \eqref{kappameno1ex2}.

\end{proof}
\end{example}


\section{Baouendi met Kolmogorov}\label{S:met}

In this section we discuss a first interesting class of hybrid evolution equations which, remarkably, is directly amenable to the setting of Proposition \ref{P:hor} by means of partial Fourier transform and a suitable exponential transformation, see \eqref{change} below. The reader should bear in mind that the work in this section is also instrumental to the remainder of the paper. Consider an invertible and symmetric $n\times n$ matrix $Q_1$. Let $n_1\leq n$, and fix also a $n_1\times n$ matrix $B_0$ having maximum rank $n_1$. We denote the relevant variables $w\in \Rn, \sigma\in \R^k, y\in \R^{n_1}$ and $t\in (0,\infty)$. Our objective is to solve the Cauchy problem in $\R^n\times\R^k\times\R^{n_1}\times (0,\infty)$,
\begin{equation}\label{uno}
\begin{cases}
\Delta_w f + \frac 14\left\langle Q_1 w,w \right\rangle \Delta_\sigma f  + \langle B_0 w,\nabla_y f\rangle  -  \p_t f = 0,
\\
f(w,\sigma,y,0) = f_0(w,\sigma,y),
\end{cases}
\end{equation}
where $f_0$ is a suitably assigned function in $\R^n\times\R^k\times\R^{n_1}$. We stress that, in our terminology, the partial differential operator $\mathscr L-\p_t$ in \eqref{uno} is hybrid since $\mathscr L = \mathscr L_1 + \mathscr L_2$, where $\mathscr L_1 f=\frac 12 \Delta_w f + \frac 14\left\langle Q_1 w,w \right\rangle \Delta_\sigma f$ and $\mathscr L_2 f= \frac 12 \Delta_w f +\langle B_0 w,\nabla_y f\rangle$, and the term $\Delta_w f$ appears in both $\mathscr L_1$ and $\mathscr L_2$. Before proceeding we emphasise that the hybrid PDE in \eqref{uno} encompasses equations as diverse as the parabolic Baouendi-Grushin equation in $\Rn\times \R^k\times (0,\infty)$ 
\begin{equation}\label{bg}
\Delta_w f + \frac{|w|^2}{4} \Delta_\sigma f - \p_t f = 0,
\end{equation}
see \cite{Ba}, \cite{Gr1, Gr2}, and the already mentioned degenerate Kolmogorov equation in $\R^{2n}\times (0,\infty)$,
\begin{equation}\label{kolmone}
\Delta_w f + \langle w,\nabla_y f\rangle  -  \p_t f = 0,
\end{equation}
see \cite{Kol}.
The former of these two limiting cases is obtained by taking $n_1 = 0$ and $Q_1 = I_n$ in \eqref{uno}, whereas the latter corresponds to taking $k = 0$, $n=n_1$, and $B_0 = I_n$ in \eqref{uno}. To ease the reader's understanding we first discuss in detail our approach to constructing the fundamental solution of \eqref{bg} since this allows to present some of the ideas in a significant, yet simpler model. This will be accomplished in the next two Subsections \ref{S:ho} and \ref{S:BGfs}, the former of which contains a self-contained construction of the Mehler fundamental solution for the generalised harmonic oscillator in \eqref{ho1} below by reducing such operator to a special case of Proposition \ref{P:hor}. We mention that when the matrix $D$ is a multiple of the identity such fundamental solution is well-known and we could have simply lifted its expression from the literature, see Remark \ref{R:mehler} below. In line with the declared self-contained spirit of the present paper, our objective is to show that Proposition \ref{P:Pippo} below can be derived from Proposition \ref{P:hor} by elementary considerations. 


\subsection{The harmonic oscillator aka Ornstein-Uhlenbeck}\label{S:ho}

In what follows given a number $n\in \mathbb N$ we denote by
$D\in M_{n\times n}(\R)$ a matrix such that $D = D^\star$, $D\ge 0$. 
We consider the Cauchy problem for the generalised harmonic oscillator
\begin{equation}\label{ho1}
\begin{cases}
\Delta_z v   - |Dz|^2 v  - \p_t v =  0,
\\
v(z,0) = v_0(z) \ \ \ \ \ \ \ \ \ \ \ \ \ z\in \Rn,\ t>0,
\end{cases}
\end{equation}
where $v_0$ is suitably chosen. We have the following key lemma.

\begin{lemma}\label{hoougen}
Suppose that the functions $v$ and $w$ are connected by the transformation \begin{equation}\label{uw}
v(z,t) = e^{-(\frac 12\langle Dz,z\rangle + t \operatorname{tr} D)} w(z,t).
\end{equation} 
Then, $v$ is a solution to the PDE in \eqref{ho1} if and only if $w$ is a solution to the following equation of Ornstein-Uhlenbeck type
$$
\Delta w - 2 \langle D z,\nabla w\rangle - w_t = 0.
$$
\end{lemma}
 
\begin{proof}
We compute
\begin{align*}
& \nabla(e^{-(\frac 12\langle Dz,z\rangle + t \operatorname{tr} D)}) = - e^{-(\frac 12\langle Dz,z\rangle + t \operatorname{tr} D)}) Dz,
\\
& \Delta(e^{-(\frac 12\langle Dz,z\rangle + t \operatorname{tr} D)}) = (|Dz|^2 - \operatorname{tr} D) e^{-(\frac 12\langle Dz,z\rangle + t \operatorname{tr} D)},
\\
& \p_t (e^{-(\frac 12\langle Dz,z\rangle + t \operatorname{tr} D)}) = - e^{-(\frac 12\langle Dz,z\rangle + t \operatorname{tr} D)} \operatorname{tr} D.
\end{align*}
This gives
\begin{align}\label{expy}
& \Delta v - |Dz|^2 v  - \p_t v
 = e^{-(\frac 12\langle Dz,z\rangle + t \operatorname{tr} D)} (\Delta w - 2 \langle Dz,\nabla w\rangle - w_t).
\end{align}  
The equation \eqref{expy} proves the lemma.
\end{proof}

The next proposition is the main result of this subsection.  

\begin{proposition}[generalised Mehler formula]\label{P:Pippo}
Let $\mathscr M$ be given by the following formula
\begin{align}\label{Pippo}
& \mathscr M(z,\zeta,t)  = (4\pi t)^{-\frac n2} \sqrt{\det j(2tD)} 
\\
& \times \exp\bigg\{-\frac{1}{4t} \bigg(\langle j(2tD) \cosh 2tD\ z, z\rangle + \langle j(2tD) \cosh 2tD\ \zeta,\zeta\rangle - 2 \langle j(2tD)\ z,\zeta \rangle\bigg)\bigg\},
\notag
\end{align}
with $j$ as in \eqref{j}. Then, for any $v_0\in C(\Rn)$ such that
\begin{equation}\label{growthDcase}
e^{-\frac{1}{2}\left\langle D z,z\right\rangle} v_0(z)\in L^{\infty}\left(\Rn\right),
\end{equation}
the function
$$v(z,t) = \int_{\Rn} \mathscr M(z,\zeta,t) v_0(\zeta) d\zeta$$
is solution of \eqref{ho1}.  
\end{proposition}

\begin{proof}
In view of Lemma \ref{hoougen} we see that if $v$ solves the Cauchy problem \eqref{ho1}, then $w = e^{\frac 12\langle D z,z\rangle + \operatorname{tr} D\ t} v$ solves the Cauchy problem
\begin{equation}\label{cpougen}
\begin{cases}
\Delta w - 2 \langle D z,\nabla w\rangle  -  w_t = 0,
\\
w(z,0) = e^{\frac12\langle Dz,z\rangle} v_0(z).
\end{cases}
\end{equation}
To solve \eqref{cpougen} we intend to apply Proposition \ref{P:hor} with $N = n$, $Q = I_n$ and $B = - 2 D$. Keeping Example \ref{ex1} in mind, we know from \eqref{formulainvK} and \eqref{kappameno1ex1} that with this choice we have
\begin{equation}\label{formulainvKbis}
K(t)^{-1}=e^{t D}j(2tD)e^{t D}
\end{equation}
and
$$
K_\infty^{-1}=4D.
$$
Therefore, the initial datum in \eqref{cpougen} is equal to
$$
w(z,0) = e^{\frac12\langle Dz,z\rangle} v_0(z)=e^{\frac14\langle K_\infty^{-1}z,z\rangle} \left(e^{-\frac12\langle Dz,z\rangle} v_0(z)\right)
$$
and, since $v_0$ is continuous and satisfies \eqref{growthDcase}, it nicely fits the assumption of Proposition \ref{P:hor}. According to \eqref{q}-\eqref{ptkolmo} the solution of \eqref{cpougen} is thus given by
$$
w(z,t)=\int_{\Rm} p(z,\zeta,t)e^{\frac12\langle D\zeta,\zeta\rangle} v_0(\zeta) d\zeta,
$$
where we have let  
\begin{align}\label{general}
p(z,\zeta,t)& =(4\pi t)^{-\frac n2}(\det  K(t))^{-1/2} e^{-\frac{1}{4t} \langle (K(t))^{-1}(\zeta-e^{-2t D}z),(\zeta-e^{-2t D}z)\rangle}.
\end{align}
By \eqref{formulainvKbis} we have in particular that
\begin{equation}\label{formuladetK}
(\det K(t))^{-1/2}= e^{t \operatorname{tr} D}\sqrt{\det j(2tD)}.
\end{equation}
Using \eqref{formulainvKbis} and \eqref{formuladetK} in \eqref{general}, we find
\begin{align}\label{general2}
p(z,\zeta,t)& =(4\pi t)^{-\frac n2} e^{t \operatorname{tr} D}\sqrt{\det j(2tD)} e^{-\frac{1}{4t} \langle j(2tD)(e^{t D}\zeta-e^{-t D}z),e^{t D}\zeta-e^{-t D}z\rangle}.
\end{align}
From \eqref{general2} and \eqref{uw} we now see that the solution of \eqref{ho1} is given by
$$
v(z,t) = \int_{\Rn} \mathscr M(z,\zeta,t) v_0(\zeta) d\zeta,
$$
where
\begin{align}\label{werthechmpnsAHAHAH}
\mathscr M(z,\zeta,t) & = (4\pi t)^{-\frac n2} \sqrt{\det j(2tD)} e^{-\frac 1{4t}\langle 2t Dz,z\rangle} e^{\frac 1{4t}\langle 2t D\zeta,\zeta\rangle} 
\\
& \times e^{-\frac{1}{4t} \langle j(2tD)(e^{t D}\zeta-e^{-t D}z),e^{t D}\zeta-e^{-t D}z\rangle}.
\notag
\end{align}
Using the tautological identity
\[
2tD =  j(2tD) \sinh 2tD = j(2tD) \frac{e^{2tD} - e^{-2tD}}2,
\]
and the fact that
\[
e^{\pm t D}  j(2tD)= j(2tD) e^{\pm t D},
\]  
we can now write the argument in the exponentials in \eqref{werthechmpnsAHAHAH} as 
\begin{align*}
& \langle 2 t D z,z\rangle - \langle 2 t D \zeta,\zeta\rangle + \langle j(2tD)(e^{t D}\zeta - e^{-t D} z),e^{t D}\zeta - e^{-t D} z\rangle
\\
& = \langle j(2tD) \sinh 2tD z,z\rangle - \langle j(2tD) \sinh 2tD \zeta,\zeta\rangle + \langle j(2tD)e^{-t D}z,e^{-t D}z\rangle
\\
& + \langle j(2tD)e^{t D}\zeta,e^{t D}\zeta\rangle -  \langle j(2tD)e^{-t D} z,e^{t D}\zeta\rangle -  \langle j(2tD)e^{t D} \zeta,e^{-t D}z\rangle
\\
& = \langle j(2tD) \sinh 2tD z,z\rangle - \langle j(2tD) \sinh 2tD \zeta,\zeta\rangle + \langle j(2tD)e^{-2t D}z,z\rangle
\\
& + \langle j(2tD)e^{2t D}\zeta,\zeta\rangle -  \langle j(2tD) z,\zeta\rangle -  \langle j(2tD)\zeta,z\rangle
\\
& = \langle j(2tD) \cosh 2tD\ z, z\rangle + \langle j(2tD) \cosh 2tD\ \zeta,\zeta\rangle - 2 \langle j(2tD)\ z,\zeta \rangle. 
\end{align*}
This shows \eqref{Pippo}. We have finally proved Proposition \ref{P:Pippo}.

\end{proof}

In what follows we will use the following alternative expression of \eqref{Pippo}
\begin{align}\label{Pippostar}
\mathscr M(z,\zeta,t)  =& (4\pi t)^{-\frac n2} \sqrt{\det j(2tD)} \times\\
&\times \exp\bigg\{-\frac{1}{4t} \bigg( \left| \sqrt{j(2tD) \cosh 2tD}\ \zeta - \sqrt{j(2tD) \cosh^{-1} 2tD}\ z \right|^2 \bigg.\bigg.\notag\\
&+\bigg.\bigg.\langle j(2tD) \left(\cosh 2tD - \cosh^{-1} 2tD\right)\ z, z\rangle \bigg)\bigg\}.\notag
\end{align}
In particular, by performing the change of variable 
\[
\zeta\mapsto \eta= \sqrt{j(2tD) \cosh 2tD}\ \frac{\zeta}{\sqrt{4t}} - \sqrt{j(2tD) \cosh^{-1} 2tD}\ \frac{z}{\sqrt{4t}},
\]
 from \eqref{Pippostar} it is immediate to recognise that
\begin{equation}\label{intpippo}
\int_{\R^n} \mathscr M(z,\zeta,t) d\zeta= \frac{1}{\sqrt{\det \cosh 2tD}}\exp\bigg\{-\frac{1}{4t}\langle j(2tD) \left(\cosh 2tD - \cosh^{-1} 2tD\right)\ z, z\rangle \bigg\}.
\end{equation}
Formula \eqref{intpippo} will be useful in the proof of Theorem \ref{T:BGpar} below.

\begin{remark}\label{R:mehler}
The reader may find it interesting to compare \eqref{Pippo} with the classical 1866 Mehler formula for the harmonic oscillator $
\Delta u - \omega |x|^2 u - u_t = 0$, see e.g. \cite[Section 4.2]{BGV},
$$
\mathscr M(x,y,t) =  (4\pi t)^{-n/2}  \left(\frac{2\sqrt \omega t}{\sinh 2\sqrt \omega t}\right)^{n/2}  e^{-\frac{\sqrt \omega}{2} (\cotanh 2\sqrt \omega t(|x|^2 +|y|^2) - 2 \csch 2\sqrt \omega t \langle x,y\rangle)}.
$$
This formula follows immediately from \eqref{Pippo} by taking $D = \sqrt{\omega} I_n$ in its expression.
\end{remark}


\subsection{The heat kernel of the Baouendi-Grushin operator}\label{S:BGfs}

In his 1967 Ph.D. Dissertation \cite{Ba} under the supervision of B. Malgrange, S. Baouendi first studied the Dirichlet problem in $L^2$ for a class of degenerate elliptic operators that includes the following model
\begin{equation}\label{B}
\Delta_w + \frac{|w|^2}4 \Delta_\sigma,
\end{equation}
where $(w,\sigma)\in \Rn\times \R^k$. At that time M. Vishik was visiting Malgrange, who discussed with him the thesis project of Baouendi. Vishik subsequently asked Malgrange permission to suggest to his own Ph.D. student, Grushin, to work on some questions related to the hypoellipticity of operators modelled on \eqref{B}, see \cite{Gr1, Gr2}. This is how the operator \eqref{B} became known as the \emph{Baouendi-Grushin operator}. This operator is also important since it is connected to harmonic functions with special symmetries in a group of Heisenberg type $\bG$. 
We notice, in this respect, that there is no global group law underlying \eqref{B}, but the operator is invariant with respect to standard translations $(w,\sigma)\to (w,\sigma +\sigma')$ along the manifold of degeneracy $M = \{0\}\times\R^k$. We consider the Cauchy problem 
\begin{equation}\label{parb}
\begin{cases}
\Delta_w u + \frac{|w|^2}4 \Delta_\sigma u - \p_t u = 0\ \ \ \ \ \ \ \text{in}\ \R^{n+k}\times (0,\infty),
\\
u((w,\sigma),0) = f(w,\sigma).
\end{cases}
\end{equation}
The next result provides an explicit heat kernel for \eqref{B}. 

\begin{theorem}\label{T:BGpar}
Let 
\begin{align}\label{parBfs}
 \mathscr B((w,\sigma),(w',\sigma'),t) & =  \frac{2^k}{(4\pi t)^{\frac{n}2 +k}} \int_{\R^k} e^{- \frac it \langle \la,\sigma'-\sigma\rangle}   \left(\frac{|\la|}{\sinh |\la|}\right)^{\frac n2} 
\\
& \times  e^{-\frac{|\la|}{4t \tanh |\la|} ((|w|^2 +|w'|^2) - 2 \langle w,w'\rangle \sech |\la|)} d\la.
\notag\end{align}
Then for every $f\in \mathscr S(\R^{n+k})$ the function
$$u((w,\sigma),t) = \int_{\Rn} \int_{\R^k} \mathscr B((w,\sigma),(w',\sigma'),t) f(w',\sigma') dw' d\sigma'$$
is a solution of \eqref{parb}.
\end{theorem}

\begin{proof}
We indicate with $\hat u(w,\la,t) = \int_{\R^k} e^{-2\pi i\langle \la,\sigma\rangle} u(w,\sigma,t) d\sigma$ the partial Fourier transform of $u$ with respect to the variable $\sigma\in \R^k$, with dual variable $\la\in \R^k$. Applying such Fourier transform to \eqref{parb}, for any fixed $\la\in \R^k$ we obtain 
$$\begin{cases}
\Delta_w \hat u -  \pi^2 |\la|^2 |w|^2 \hat u - \p_t \hat u = 0\ \ \ \ \ \ \ \text{in}\ \R^{n}\times (0,\infty),
\\
\hat u((w,\la),0) = \hat f(w,\la).
\end{cases}$$
This is a Cauchy problem for a harmonic oscillator such as \eqref{ho1} above, with matrix $D = D(\la) = \pi |\la| I_n$. From Proposition \ref{P:Pippo} we know that the solution of such problem is given by the formula
\[
\hat u((w,\la),t) = \int_{\Rn} \mathscr M_\la(w,w',t) \hat f(w',\la) dw' = \int_{\R^k} e^{-2\pi i\langle \la,\sigma'\rangle}  \int_{\Rn} \mathscr M_\la(w,w',t) f(w',\sigma') dw' d\sigma',
\]
where $\mathscr M_\la(w,w',t)$ is Mehler's fundamental solution given by 
\begin{equation}\label{solho3}
\mathscr M_\la(w,w',t)  =  (4\pi)^{-n/2}  \left(\frac{2\pi |\la|}{\sinh 2\pi t|\la|}\right)^{n/2} 
   e^{-\frac{\pi |\la|}{2} ((|w|^2 +|w'|^2) \cotanh(2 \pi t |\la|) - 2 \langle w,w'\rangle \csch(2\pi t |\la|))},
\end{equation}
see Remark \ref{R:mehler}. We know that $\hat u((w,\la),t) \underset{t\to 0^+}{\longrightarrow} \hat f(w,\la)$ in the pointwise sense. We will now show that, for every fixed $w\in \Rn$, such convergence also holds in $L^1(\R^k, d \lambda)$. To see this we write 
\begin{align*}
&\int_{\R^k}|\hat u((w,\la),t)-\hat f(w,\la)|d\lambda \\
&= \int_{\R^k}\left|\hat u((w,\la),t)-\hat f(w,\la) \left(\int_{\Rn} \mathscr M_\la(w,w',t) dw' + 1 -\int_{\Rn} \mathscr M_\la(w,w',t) dw'\right)\right|d\lambda\\
&\leq  \int_{\R^k}\int_{\Rn} \mathscr M_\la(w,w',t) \left|\hat f(w',\la) - \hat f(w,\la)\right|  dw'd\lambda + \int_{\R^k} \left| 1 -\int_{\Rn} \mathscr M_\la(w,w',t) dw'\right| \left|\hat f(w,\la)\right|d\lambda.
\end{align*}
Applying \eqref{intpippo} with $D=\pi|\lambda|I_n$ we easily obtain
$$
\int_{\Rn} \mathscr M_\la(w,w',t) dw'  = \left(\frac{1}{\cosh 2\pi t|\lambda|}\right)^{\frac{n}{2}}e^{-\frac{|w|^2}{4t}\frac{2\pi t|\lambda|}{\tanh 2\pi t |\lambda|}\left(1-\sech^2 2\pi t|\lambda|\right) }.
$$
From this identity we immediately see that
\begin{itemize}
\item[(i)] $0\leq \int_{\Rn} \mathscr M_\la(w,w',t) dw'  \le 1$;
\item[(ii)] $\int_{\Rn} \mathscr M_\la(w,w',t) dw' \to 1$ as $t\to 0^+$.
\end{itemize}
From (i) and (ii) we infer by dominated convergence theorem that
$$
\int_{\R^k} \left| 1 -\int_{\Rn} \mathscr M_\la(w,w',t) dw'\right| \left|\hat f(w,\la)\right|d\lambda \underset{t\to 0^+}{\longrightarrow} 0.
$$
On the other hand, by applying \eqref{Pippostar} with $D=\pi|\lambda|I_n$ and performing the change of variables $w'\mapsto \eta$ with $w'-w\sech 2\pi t|\lambda|=\eta\sqrt{4t\frac{\tanh 2\pi t |\lambda|}{2\pi t|\lambda|}}$, we deduce
\begin{align*}
&\int_{\R^k}\int_{\Rn} \mathscr M_\la(w,w',t) \left|\hat f(w',\la) - \hat f(w,\la)\right|  dw'd\lambda\\
&=\pi^{-\frac{n}{2}}\int_{\R^k}\int_{\Rn} \left(\frac{1}{\cosh 2\pi t|\lambda|}\right)^{\frac{n}{2}} e^{-\frac{|w|^2}{4t}\frac{2\pi t|\lambda|}{\tanh 2\pi t |\lambda|}\left(1-\sech^2 2\pi t|\lambda|\right) } e^{-|\eta|^2} \times\\
&\times \left|\hat f\left(w\sech 2\pi t|\lambda|+\eta\sqrt{4t\frac{\tanh 2\pi t |\lambda|}{2\pi t|\lambda|}},\la\right) - \hat f(w,\la)\right|  d\eta d\lambda \\
&\leq \pi^{-\frac{n}{2}}\int_{\R^k}\int_{\Rn} e^{-|\eta|^2} \left|\hat f\left(w\sech 2\pi t|\lambda|+\eta\sqrt{4t\frac{\tanh 2\pi t |\lambda|}{2\pi t|\lambda|}},\la\right) - \hat f(w,\la)\right|  d\eta d\lambda \underset{t\to 0^+}{\longrightarrow} 0,
\end{align*}
where the last limiting relation can be again justified via dominated convergence theorem since $\hat f \in\mathscr S$ and therefore $\hat f(w',\lambda)$ is continuous at $(w,\lambda)$ and it can be bounded by an integrable function uniformly in $w'$. This proves that
\begin{equation}\label{lunogrus}
\underset{t\to 0^+}{\lim} \int_{\R^k}|\hat u((w,\la),t)-\hat f(w,\la)|d\lambda =0 \qquad\forall\, w\in\R^n.
\end{equation}
If we now take the inverse Fourier transform of $\hat u((w,\la),t)$, we find the following representation for the solution of problem \eqref{parb}
\begin{equation}\label{solho4}
u((w,\sigma),t) = \int_{\Rn} \int_{\R^k} \left(\int_{\R^k} e^{-2\pi i\langle \la,\sigma'-\sigma\rangle}  \mathscr M_\la(w,w',t) d\la\right) f(w',\sigma') dw' d\sigma'.
\end{equation}
We stress that \eqref{lunogrus} ensures uniform, and therefore pointwise convergence of $u((w,\sigma),t)$ to $f(w,\sigma)$ as $t\to 0^+$. From \eqref{solho4} it is clear that the heat kernel of the parabolic Baouendi-Grushin equation in \eqref{parb} is thus given by
\begin{equation}\label{parb4}
\mathscr B((w,\sigma),(w',\sigma'),t) = \int_{\R^k} e^{-2\pi i\langle \la,\sigma'-\sigma\rangle}  \mathscr M_\la(w,w',t) d\la,
\end{equation}
where $\mathscr M_\la(w,w',t)$ is as in \eqref{solho3}. Changing variable $\la \to 2\pi t \la$ in the integral over $\R^k$, we finally obtain \eqref{parBfs} from \eqref{parb4}.

\end{proof}

We mention the works \cite{CCGGL, BFIh, CCFI} for various derivations and integral representations of the heat kernel for \eqref{B} when $k=1$. For related discussions about the fundamental solutions of more general (time-independent) Baouendi-Grushin operators we refer the reader to \cite{Gjde, BGG2, BFI}.


\subsection{Back to the hybrid equation }\label{S:back}

In this subsection we finally solve the Cauchy problem \eqref{uno}. In preparation for our main result, Theorem \ref{T:BK} below, we introduce for every $\la\in \R^k$ the $(n+n_1)\times (n+n_1)$ matrices
\begin{equation}\label{QB}
Q= \begin{pmatrix} I_n & O_{n\times n_1}\\ O_{n_1\times n}& O_{n_1\times n_1}\end{pmatrix},\ \ \ \ \ \ \ \hspace{0.5cm}
B_\la = \begin{pmatrix} - 2\pi|\lambda|\sqrt{Q_1} & O_{n\times n_1}\\ B_0 & O_{n_1\times n_1}\end{pmatrix}.
\end{equation}
For every $t>0$ we next consider the covariance matrix associated with $Q$ and $B_\la$
\begin{equation}\label{Kla}
tK_\lambda(t)\overset{def}{=}\int_0^te^{sB_\la}Qe^{sB_\la^*} ds.
\end{equation}
If we keep in mind Example \ref{ex2} with the choice $D_1=\pi|\lambda|\sqrt{Q_1}$, we know that the matrix $K_\lambda(t)$ is positive definite for every $t>0$, i.e. it satisfies the Kalman condition \eqref{Kt} above. Moreover, by \eqref{kappameno1ex2} we have
\begin{equation}\label{eccolamaldida}
K_{\la,\infty}^{-1}=\begin{pmatrix} 4 \pi|\lambda|\sqrt{Q_1} & O_{n\times n_1}\\ O_{n_1\times n} & O_{n_1\times n_1}\end{pmatrix}.
\end{equation}
We next establish a lemma that will play a crucial role in the proof of Theorem \ref{T:BK}.
\begin{lemma}\label{wmatrix}
For any $t>0$ and $\lambda\in\R^k$ we have
\begin{align*}
(tK_\lambda(t))^{-1}+\frac{1}{2}e^{-tB^*_\la}K_{\la,\infty}^{-1}e^{-tB_\la} \geq \left(tK_\lambda(t) - \frac{1}{2} tK_\lambda(t) K_{\la,\infty}^{-1} tK_\lambda(t)\right)^{-1}.
\end{align*}
\end{lemma}
\begin{proof}
The case $\lambda=0$ is trivial since $tK_\lambda(t)$ is still a positive definite matrix and $K_{\la,\infty}^{-1}$ is the null matrix. We can thus assume $\lambda\neq 0$. Keeping in mind the explicit form of $e^{-tB_\la}$ (see Example \ref{ex2} with $D_1=\pi |\lambda| \sqrt{Q_1}$), we notice that
$$
\frac{1}{2}e^{-tB^*_\la}K_{\la,\infty}^{-1}e^{-tB_\la}=\begin{pmatrix} 2 \pi|\lambda|\sqrt{Q_1}e^{4 \pi t|\lambda|\sqrt{Q_1}} & O_{n\times n_1}\\ O_{n_1\times n} & O_{n_1\times n_1}\end{pmatrix}
$$
is a $(n+n_1)\times (n+n_1)$ matrix of rank $n$. We can then exploit the formula for the inverse of a small-rank adjustment (see, e.g., \cite[Section 0.7.4]{HJ}), which is sometimes referred to as the Sherman-Morrison-Woodbury formula, to infer that
\begin{align*}
&\left( (tK_\lambda(t))^{-1}+\frac{1}{2}e^{-tB^*_\la}K_{\la,\infty}^{-1}e^{-tB_\la}\right)^{-1} \\
&= tK_\lambda(t) - tK_\lambda(t) \begin{pmatrix} \left( \frac{1}{2 \pi|\lambda|}Q_1^{-\frac{1}{2}}e^{-4 \pi t|\lambda|\sqrt{Q_1}} + \frac{1}{4 \pi|\lambda|}Q_1^{-\frac{1}{2}}\left(I_n - e^{-4 \pi t|\lambda|\sqrt{Q_1}}\right) \right)^{-1} & O_{n\times n_1}\\ O_{n_1\times n} & O_{n_1\times n_1}\end{pmatrix} tK_\lambda(t),
\end{align*}
where we have used the fact that the first block $K_{11}(t)$ in $tK_\lambda(t)$ is equal to 
$$\frac{1}{4 \pi|\lambda|}Q_1^{-\frac{1}{2}}\left(I_n - e^{-4 \pi t|\lambda|\sqrt{Q_1}}\right)$$
(see again Example \ref{ex2}). By exploiting the simple inequality $\left(I_n + e^{-4 \pi t|\lambda|\sqrt{Q_1}}\right)^{-1}\geq \frac{1}{2}I_n$, we deduce that
\begin{align*}
&\left( (tK_\lambda(t))^{-1}+\frac{1}{2}e^{-tB^*_\la}K_{\la,\infty}^{-1}e^{-tB_\la}\right)^{-1} \\
&=tK_\lambda(t) - tK_\lambda(t) \begin{pmatrix} 4\pi |\lambda|\sqrt{Q_1}\left(I_n + e^{-4 \pi t|\lambda|\sqrt{Q_1}}\right)^{-1} & O_{n\times n_1}\\ O_{n_1\times n} & O_{n_1\times n_1}\end{pmatrix} tK_\lambda(t)\\
&\leq tK_\lambda(t) - tK_\lambda(t) \begin{pmatrix} 2\pi |\lambda|\sqrt{Q_1} & O_{n\times n_1}\\ O_{n_1\times n} & O_{n_1\times n_1}\end{pmatrix} tK_\lambda(t) =  tK_\lambda(t) - \frac{1}{2} tK_\lambda(t) K_{\la,\infty}^{-1} tK_\lambda(t)
\end{align*}
which implies the desired conclusion.

\end{proof}

{\allowdisplaybreaks
\begin{theorem}\label{T:BK}
For $X=(w,y)$ and $X_0=(w_0,y_0)$ we let
$$
p_\la((w,y),(w_0,y_0),t) =(4\pi)^{-\frac{n+n_1}{2}}(\det tK_\lambda(t))^{-1/2} e^{-\frac{1}{4} \langle (tK_\lambda(t))^{-1}\left(X_0-e^{tB_\la}X\right), X_0-e^{tB_\la}X\rangle},
$$
and define
\begin{align}\label{kernelBK}
h((w,\sigma,y),(w_0,\sigma_0,y_0),t) = \int_{\R^k} & e^{2\pi i \left\langle \sigma-\sigma_0,\lambda\right\rangle}e^{-\pi|\lambda|(\frac 12 \left\langle \sqrt{Q_1}(w+w_0),w-w_0 \right\rangle + \operatorname{tr} \sqrt{Q_1} t )} 
\\
& \times p_\la((w,y),(w_0,y_0),t)\ d\la.
\notag
\end{align}
Given $f_0\in \mathscr S(\R^n\times\R^k\times\R^{n_1})$, the function
\begin{align}\label{unocp}
f(w,\sigma,y,t) & = \int_{\R^n\times\R^k\times\R^{n_1}}  h((w,\sigma,y),(w_0,\sigma_0,y_0),t)\ f_0(w_0,\sigma_0,y_0)\     dw_0 d\sigma_0 dy_0,
\end{align}
solves the Cauchy problem \eqref{uno}. 
\end{theorem}

\begin{proof}
As before, our first step is to take the Fourier transform of \eqref{uno} with respect to the variable $\sigma$, with dual variable $\la\in \R^k$. If we let  
\[
v_\lambda(w,y,t)=\hat{f}(w,\lambda,y,t) = \int_{\R^k} e^{-2\pi i \langle \la,\sigma\rangle} f(w,\sigma,y,t) d\sigma,
\]
then for every fixed $\la\in \R^k$ the problem \eqref{uno} becomes in $\R^n\times\R^{n_1}\times (0,\infty)$
\begin{equation}\label{due}
\begin{cases}
\Delta_w v_\lambda  - \pi^2|\lambda|^2\left\langle Q_1 w,w \right\rangle  v_\lambda + \langle B_0 w,\nabla_y v_\lambda\rangle  -  \partial_t v_\lambda = 0,
\\
v_\lambda(w,y,0) = \hat{f}_0(w,\lambda,y), \ \ \ \ \ \ \ \ \ \ \ \ \ (w,y)\in \R^n\times\R^{n_1}.
\end{cases}
\end{equation}
Our second step is to make the following change of dependent variable $v_\la \to u_\la$, where the two functions are linked by the relation
\begin{equation}\label{change}
v_\lambda(w,y,t) = e^{-\pi|\lambda|(\frac 12 \left\langle \sqrt{Q_1} w,w \right\rangle + \operatorname{tr}\sqrt{Q_1} t )} u_\la(w,y,t).
\end{equation}
This step represents a generalised version of \eqref{uw} in Lemma \ref{hoougen}. 
After some straightforward computations one recognises that in terms of the function $u_\la$ the problem \eqref{due} becomes in $\R^n\times\R^{n_1}\times (0,\infty)$
\begin{equation}\label{tre}
\begin{cases}
\Delta_w u_\la - 2\pi|\lambda| \langle  \sqrt{Q_1} w,\nabla_w u_\la\rangle + \langle B_0 w,\nabla_y u_\la\rangle  -  \partial_t u_\la = 0,
\\
u_\la(w,y,0) = e^{\frac{\pi}2 |\lambda| \left\langle \sqrt{Q_1} w,w \right\rangle}\hat{f}_0(w,\lambda,y), \ \ \ \ \ \ \ \ \ \ \ \ \ (w,y)\in \R^n\times\R^{n_1}.
\end{cases}
\end{equation}
Remarkably, the PDE in \eqref{tre} can be cast in the form \eqref{A0}, where now $N = n + n_1$, and the matrices $Q$ and $B = B_\la$ are given by \eqref{QB}. The covariance matrix is given by the positive definite matrix $K_\la(t)$ in \eqref{Kla}. By \eqref{eccolamaldida} we can rewrite the initial datum in \eqref{tre} as
$$u_\la(w,y,0) = e^{\frac{\pi}2 |\lambda| \left\langle \sqrt{Q_1} w,w \right\rangle}\hat{f}_0(w,\lambda,y)=e^{\frac{1}{8} \left\langle K_{\la,\infty}^{-1} (w,y),(w,y)\right\rangle}\hat{f}_0(w,\lambda,y).$$
Since $\hat{f}_0$ is bounded, we can apply Proposition \ref{P:hor}: if we thus let $X=(w,y)$ and $X_0=(w_0,y_0)$, and define $p_\la((w,y),(w_0,y_0),t)$ as in \eqref{q} above, we infer that the function
$$
u_\la(w,y,t)=\int_{\R^n\times\R^{n_1}}p_\la((w,y),(w_0,y_0),t)e^{\frac 12 \pi|\lambda| \left\langle \sqrt{Q_1} w_0,w_0 \right\rangle}\hat{f}_0(w_0,\lambda,y_0) dw_0dy_0
$$
solves the problem \eqref{tre}. In view of \eqref{change} this implies that 
\begin{align*}
v_\lambda(w,y,t)=&e^{-\pi|\lambda|(\frac 12 \left\langle \sqrt{Q_1} w,w \right\rangle + \operatorname{tr} \sqrt{Q_1} t )}\times\\
&\times \int_{\R^n\times\R^{n_1}}p_\la((w,y),(w_0,y_0),t)e^{\frac 12 \pi|\lambda| \left\langle \sqrt{Q_1} w_0,w_0 \right\rangle}\hat{f}_0(w_0,\lambda,y_0) dw_0dy_0.
\end{align*}
As we intend to take the inverse Fourier transform of $v_\lambda(w,y,t)$ with respect to $\lambda$, we want to understand the behaviour of $v_\lambda(w,y,t)$ with respect to this variable. Since $f_0$ belongs to the Schwartz class, we know that $\hat{f}_0(w_0,\lambda,y_0)$ decays faster than any polynomial in $\lambda$ in a uniform way with respect to $(w_0,y_0)$. Our objective is thus to analyse the function
$$
I_{X}(\lambda,t)=e^{-\pi|\lambda|(\frac 12 \left\langle \sqrt{Q_1} w,w \right\rangle + \operatorname{tr} \sqrt{Q_1} t )} \int_{\R^n\times\R^{n_1}}p_\la((w,y),(w_0,y_0),t)e^{\frac 12 \pi|\lambda| \left\langle \sqrt{Q_1} w_0,w_0 \right\rangle} dw_0dy_0.
$$
Using \eqref{eccolamaldida} and the explicit expression of $p_\la(X,X_0,t)$ in \eqref{q}, we write
\begin{align}\label{iiii}
&I_{X}(\lambda,t)=\frac{e^{-\pi t|\lambda|\operatorname{tr} \sqrt{Q_1}}}{ (4\pi)^{\frac{n+n_1}{2}} }\frac{e^{-\frac 18 \left\langle K_{\la,\infty}^{-1} X,X \right\rangle}}{\sqrt{\det tK_\lambda(t)}} \times \\
&\times \int_{\R^{n+n_1}}e^{-\frac{1}{4} \langle (tK_\lambda(t))^{-1}\left(X_0-e^{tB_\la}X\right), X_0-e^{tB_\la}X\rangle}e^{\frac 18 \left\langle K_{\la,\infty}^{-1} X_0,X_0 \right\rangle} dX_0 \notag\\
&=\frac{e^{-\pi t|\lambda|\operatorname{tr} \sqrt{Q_1}}}{ (4\pi)^{\frac{n+n_1}{2}} }\frac{e^{-\frac 14 \left\langle \left( e^{tB^*_\la}(tK_\lambda(t))^{-1}e^{tB_\la}+\frac{1}{2}K_{\la,\infty}^{-1}\right) X,X \right\rangle}}{\sqrt{\det tK_\lambda(t)}}\times\notag\\
&\times e^{\frac 14 \left\langle \left(I -\frac{1}{2}\left(tK_\lambda(t)\right)^{\frac{1}{2}}K_{\la,\infty}^{-1}\left(tK_\lambda(t)\right)^{\frac{1}{2}}\right)^{-1} \left(tK_\lambda(t)\right)^{-\frac{1}{2}} e^{tB_\la} X,\left(tK_\lambda(t)\right)^{-\frac{1}{2}} e^{tB_\la} X \right\rangle}\times\notag\\
&\times \int_{\R^{n+n_1}}\exp\left\{-\frac{1}{4}\left|\left(I -\frac{1}{2}\left(tK_\lambda(t)\right)^{\frac{1}{2}}K_{\la,\infty}^{-1}\left(tK_\lambda(t)\right)^{\frac{1}{2}}\right)^{\frac{1}{2}}\left(tK_\lambda(t)\right)^{-\frac{1}{2}}X_0 + \right.\right. \notag\\
&\left.\left.\hspace{1.5cm}-\left(I -\frac{1}{2}\left(tK_\lambda(t)\right)^{\frac{1}{2}}K_{\la,\infty}^{-1}\left(tK_\lambda(t)\right)^{\frac{1}{2}}\right)^{-\frac{1}{2}} \left(tK_\lambda(t)\right)^{-\frac{1}{2}} e^{tB_\la} X\right|^2\right\} dX_0\notag\\
&=e^{-\pi t|\lambda|\operatorname{tr} \sqrt{Q_1}}\left(\det \left(I -\frac{1}{2}\left(tK_\lambda(t)\right)^{\frac{1}{2}}K_{\la,\infty}^{-1}\left(tK_\lambda(t)\right)^{\frac{1}{2}}\right)\right)^{-\frac{1}{2}}  \times\notag\\
&\times e^{-\frac 14 \left\langle \left( (tK_\lambda(t))^{-1}+\frac{1}{2}e^{-tB^*_\la}K_{\la,\infty}^{-1}e^{-tB_\la}\right) e^{tB_\la}X,e^{tB_\la}X \right\rangle}\times\notag\\
&\times e^{\frac 14 \left\langle \left(\left(tK_\lambda(t)\right) -\frac{1}{2}\left(tK_\lambda(t)\right)K_{\la,\infty}^{-1}\left(tK_\lambda(t)\right)\right)^{-1} e^{tB_\la} X,e^{tB_\la} X \right\rangle}\notag
\end{align}
where in the last equality we have used the change of variables $X_0\mapsto Z$, where 
\begin{align*}
Z&=\left(I -\frac{1}{2}\left(tK_\lambda(t)\right)^{\frac{1}{2}}K_{\la,\infty}^{-1}\left(tK_\lambda(t)\right)^{\frac{1}{2}}\right)^{\frac{1}{2}}\left(4tK_\lambda(t)\right)^{-\frac{1}{2}}X_0 +\\
&-\left(I -\frac{1}{2}\left(tK_\lambda(t)\right)^{\frac{1}{2}}K_{\la,\infty}^{-1}\left(tK_\lambda(t)\right)^{\frac{1}{2}}\right)^{-\frac{1}{2}} \left(4tK_\lambda(t)\right)^{-\frac{1}{2}} e^{tB_\la} X,
\end{align*}
and the fact that $\int_{\R^{n+n_1}}e^{-|Z|^2}dZ = \pi^{\frac{n+n_1}{2}}$. At this point a small miracle happens since from Lemma \ref{wmatrix} we have, for every $X\in\R^{n+n_1}, \lambda\in\R^k$, and $t>0$, that
\begin{align*}
&\left\langle \left( (tK_\lambda(t))^{-1}+\frac{1}{2}e^{-tB^*_\la}K_{\la,\infty}^{-1}e^{-tB_\la}\right) e^{tB_\la}X,e^{tB_\la}X \right\rangle\\
&\geq \left\langle \left(\left(tK_\lambda(t)\right) -\frac{1}{2}\left(tK_\lambda(t)\right)K_{\la,\infty}^{-1}\left(tK_\lambda(t)\right)\right)^{-1} e^{tB_\la} X,e^{tB_\la} X \right\rangle.
\end{align*}
Inserting this information in \eqref{iiii} we obtain
\begin{equation}\label{chebound}
0\leq I_{X}(\lambda,t)\leq \frac{e^{-\pi t|\lambda|\operatorname{tr} \sqrt{Q_1}}}{ \left(\det \left(I -\frac{1}{2}\left(tK_\lambda(t)\right)^{\frac{1}{2}}K_{\la,\infty}^{-1}\left(tK_\lambda(t)\right)^{\frac{1}{2}}\right)\right)^{\frac{1}{2}} } \leq 2^{\frac{n+n_1}{2}} e^{-\pi t|\lambda|\operatorname{tr} \sqrt{Q_1}},
\end{equation}
where in the last inequality we have exploited \eqref{strict} to obtain $\left(I -\frac{1}{2}\left(tK_\lambda(t)\right)^{\frac{1}{2}}K_{\la,\infty}^{-1}\left(tK_\lambda(t)\right)^{\frac{1}{2}}\right)\geq \frac{1}{2}I$. Moreover we also have
\begin{equation}\label{IXt0}
\underset{t\to 0^+}{\lim} I_X(\lambda, t) =1\qquad\,\, \forall\, X\in\R^{n+n_1},\, \lambda\in\R^k.
\end{equation}
The limiting behaviour in \eqref{IXt0} can be checked by using the change of variables $X_0\mapsto Z$ where $X_0=e^{tB_\lambda}X + (4tK_\lambda(t))^{\frac{1}{2}}Z$ in the definition of $I_X(\lambda, t)$, as this gives
\begin{align*}
I_{X}(\lambda,t)=&\frac{e^{-\pi t|\lambda|\operatorname{tr} \sqrt{Q_1}}}{ \pi^{\frac{n+n_1}{2}} }e^{-\frac 18 \left\langle K_{\la,\infty}^{-1} X,X \right\rangle}\times\\
&\times\int_{\R^{n+n_1}}e^{-|Z|^2+\frac{1}{8} \left\langle K_{\la,\infty}^{-1}\left(e^{tB_\lambda}X + (4tK_\lambda(t))^{\frac{1}{2}}Z \right),\left(e^{tB_\lambda}X + (4tK_\lambda(t))^{\frac{1}{2}}Z \right) \right\rangle} dZ.
\end{align*}
Since $e^{tB_\lambda}X + (4tK_\lambda(t))^{\frac{1}{2}}Z\to X$ as $t\to 0^+$, we easily obtain \eqref{IXt0} from the above identity. 
Hence, by exploiting \eqref{chebound} and \eqref{IXt0}, we can argue as in the proof of \eqref{lunogrus} in Theorem \ref{T:BGpar} to deduce that 
\begin{equation}\label{sitornaa}
\int_{\R^{k}}\left|v_\lambda(w,y,t)-\hat{f}_0(w,\lambda,y)\right|d\lambda \underset{t\to 0^+}{\longrightarrow} 0.
\end{equation}
Keeping in mind that $v_\lambda(w,y,t)=\hat{f}(w,\lambda,y,t)$, we are now ready to take the inverse Fourier transform, obtaining  
\begin{align*}
f(w,\sigma,y,t)=\int_{\R^k}\int_{\R^n\times\R^{n_1}}\int_{\R^k}&  e^{2\pi i \left\langle \sigma,\lambda\right\rangle}e^{-\pi|\lambda|(\frac 12 \left\langle \sqrt{Q_1} w,w \right\rangle + \operatorname{tr} \sqrt{Q_1} t )}p_\la((w,y),(w_0,y_0),t)\times\\
&\times e^{\frac 12 \pi|\lambda| \left\langle \sqrt{Q_1} w_0,w_0 \right\rangle} e^{-2\pi i \left\langle \sigma_0,\lambda\right\rangle}f_0(w_0,\sigma_0,y_0) d\sigma_0 dw_0dy_0 d\lambda.
\end{align*}
Since $f_0\in \mathscr{S}$ and thanks to \eqref{chebound}, $f(w,\sigma,y,t)$ is a well-defined and smooth function and it coincides with the expression stated in \eqref{unocp}. Proceeding verbatim  as in the proof of Theorem \ref{T:BGpar} and using \eqref{sitornaa}, we also see that $f$ is solution of the Cauchy problem \eqref{uno}. We conclude that the function $h(\cdot,\cdot,\cdot)$ defined by \eqref{kernelBK} does provide the heat kernel. We stress that, for any $(w,\sigma,y), (w_0,\sigma_0,y_0)\in \R^n\times\R^k\times\R^{n_1}$ and $t>0$, $h((w,\sigma,y),(w_0,\sigma_0,y_0),t)$ is well-defined (and smooth) since, by arguing as in \eqref{iiii}-\eqref{chebound}, we have
\begin{align*}
&|h((w,\sigma,y),(w_0,\sigma_0,y_0),t)|\\
&\leq \int_{\R^k} e^{-\pi|\lambda|(\frac 12 \left\langle \sqrt{Q_1} w,w \right\rangle + \operatorname{tr} \sqrt{Q_1} t )}p_\la((w,y),(w_0,y_0),t)e^{\frac 12 \pi|\lambda| \left\langle \sqrt{Q_1} w_0,w_0 \right\rangle}d\lambda\\
&\leq (2\pi)^{-\frac{n+n_1}{2}} \int_{\R^k}\frac{e^{-\pi t|\lambda|\operatorname{tr} \sqrt{Q_1}}}{\sqrt{\det tK_\lambda(t)}} d\lambda <\infty,
\end{align*}
where in the last inequality we have used that $\operatorname{tr} \sqrt{Q_1} >0$ and the fact that $\left(\det tK_\lambda(t)\right)^{-1}$ grows at most polynomially with respect to $\lambda$ (see the explicit form of $tK_\lambda(t)$ in Example \ref{ex2} with $D_1=\pi|\lambda| \sqrt{Q_1}$). This finishes the proof of the theorem.

\end{proof}
}

If we set $n=n_1\in\N$ and $Q_1=B=I_n$, the PDE in \eqref{uno} becomes the hybrid equation highlighted in \eqref{one}. In this special situation, for $\lambda\in\R^k$ with $\lambda\neq 0$ and $t>0$, we have
$$
e^{t B_\la}= \begin{pmatrix} e^{- 2\pi t|\lambda|}I_n & O_{n\times n}\\ \frac{1-e^{-2\pi t |\lambda|}}{2\pi |\lambda|} I_n & I_n\end{pmatrix}
$$
and
$$
tK_\lambda(t)=e^{-2\pi t|\lambda|}\begin{pmatrix} \frac{\sinh(2\pi t|\lambda|)}{2\pi|\lambda|}I_n & \frac{\cosh(2\pi t|\lambda|)-1}{4\pi^2|\lambda|^2}I_n\\ \frac{\cosh(2\pi t|\lambda|)-1}{4\pi^2|\lambda|^2}I_n & \frac{e^{2\pi t|\lambda|}(2\pi t |\lambda|-\sinh(2\pi t |\lambda|))+(\cosh(2\pi t |\lambda|)-1)(e^{2\pi t|\lambda|}-1)}{8\pi^3|\lambda|^3}I_n\end{pmatrix},
$$
which yields the explicit formula \eqref{two} for the heat kernel.


\section{A class of heat kernels from conformal CR geometry}\label{S:CR}

In this section we construct the heat kernel of a class of hybrid evolution equations that play an important role in conformal CR geometry. In Section \ref{S:intro} we have already discussed the extension problem \eqref{fgmt} in the Heisenberg group $\Hn$ in the seminal work of Frank et al. \cite{FGMT}. More in general, we now consider a Lie group of Heisenberg type $\bG$ with logarithmic coordinates $(z,\sigma)$, where $z\in \R^m$ and $\sigma\in \R^k$ (see Section \ref{S:2}). If 
$\mathscr L = \Delta_z + \frac{|z|^2}{4} \Delta_\sigma  + \sum_{\ell = 1}^k \p_{\sigma_\ell} \Theta_\ell$,
denotes a horizontal Laplacian in $\bG$ (see \eqref{Lhtipo} below), then in this more general framework the parabolic counterpart of the  extension problem \eqref{fgmt} is as follows: given a function $u\in C^\infty_0(\bG\times \R)$, find a function $U\in C^\infty(\bG\times \R\times \R^+_y)$ such that
\begin{equation}\label{fgmtG}
\begin{cases}
\p_{yy} U  + \frac{1-2s}y \p_y U + \frac{y^2}4 \Delta_\sigma U + \mathscr L U  - \p_t U = 0,\ \ \ \ \ \ \ \ \ \ \ \ \ \ \ \ \ \text{in}\ \bG\times \R\times  \R^+_y,
\\
U(g,t,0) = u(g,t),\ \ \ \ \ \ \ \ \ \ \ \ \ \ \ \ \ \ \ \ \ \ \ \ \ \ \ \ \ \ \ \ \ \ \ \ \ \ \ \ \ \ \ \ \ \ \ \ \ \ \  (g,t)\in \bG\times \R.
\end{cases}
\end{equation} 
Our present objective is the computation of the heat kernel of the evolution PDE in \eqref{fgmtG}, i.e. of the equation defined in $\bG\times \R\times  \R^+_y$ as
\begin{equation}\label{defextLs}
\mathfrak L_{(s)} U -\partial_t U \overset{def}{=} \p_{yy} U  + \frac{1-2s}y \p_y U + \frac{y^2}4 \Delta_\sigma U + \mathscr L U  - \p_t U=0
\end{equation}
The following is our main result.

\begin{theorem}\label{T:ext0}
Let $\bG$ be a group of Heisenberg type. For every $0<s<1$ the heat kernel with pole at the origin of the operator $\mathfrak L_{(s)}-\partial_t$ in \eqref{defextLs} is given by
\begin{align}\label{parBfs00}
 \mathfrak q_{(s)}((z,\sigma),t,y) & =  \frac{2^k}{(4\pi t)^{\frac{m}2 +k +1-s}} \int_{\R^k} e^{- \frac it \langle \sigma,\la\rangle}   \left(\frac{|\la|}{\sinh |\la|}\right)^{\frac m2+1-s} e^{-\frac{|z|^2 +y^2}{4t}\frac{|\la|}{\tanh |\la|}} d\la.
\end{align}
\end{theorem}
We emphasise that although the heat kernel \eqref{parBfs00}, and its intertwined counterpart obtained by replacing $s$ with $-s$, have played a central role in our recent works \cite{GTfeel} and \cite{GTinter}, see also the earlier papers by Roncal and Thangavelu \cite{RTaim, RT}, their derivation was not given in the cited sources and it appears for the first time in the present paper.

As in the case of $\Hn$ (which constitutes the $k=1$ case of our treatment), a key observation here is that the PDE in \eqref{defextLs} is the restriction of the equation
\begin{equation}\label{hybG}
\Delta_w U+ \frac{|w|^2}{4} \Delta_\sigma U + \mathscr L U - \p_t U = 0
\end{equation}
to functions depending on the variable $y = |w|$, where $w$ belongs to the space with fractal dimension $\R^{2(1-s)}$. The link between \eqref{hybG} and the PDE in \eqref{defextLs} is readily seen by observing that, if $y = |w|$, then on a function $u(w) = \psi(y)$ we have $\Delta_w u = \p_{yy} \psi + \frac {1-2s}{y}\p_y \psi$. Another remark is that \eqref{hybG} is of hybrid type since the variable $\sigma$ appears in both equations 
\[
\Delta_w U+ \frac{|w|^2}{4} \Delta_\sigma U  - \p_t U = 0,
\]
and 
\[
\mathscr L U - \p_t U = 0,
\] 
see \eqref{Lhtipo} below for the expression of $\mathscr L$.
Also observe that, similarly to the situation of the operator in \eqref{uno} in Section \ref{S:met}, the PDE in \eqref{hybG} contains the limiting case in which the fractal dimension $n_1= 2(1-s) = 0$ of the variable $w$ vanishes, which is equivalent to letting $s \nearrow 1$. In such case the PDEs \eqref{defextLs}, \eqref{hybG} formally become 
\begin{equation}\label{heatG}
\mathscr L U - \p_t U= 0,
\end{equation}
the heat equation in $\bG$ associated with the horizontal Laplacian $\mathscr L$. For the latter the heat kernel is well-known and it is given by
\begin{align}\label{pipposemprepiupiccoloH}
& q((z,\sigma),(\zeta,\tau),t) = \frac{2^k}{\left(4\pi t\right)^{\frac m2+k}}\int_{\R^k} \left(\frac{|\la|}{\sinh |\la|}\right)^{\frac m2} e^{\frac it (\langle\tau -\sigma,\la\rangle + \frac 12\langle J(\la)\zeta,z\rangle)} e^{- \frac{|z-\zeta|^2}{4t} \frac{|\la|}{\tanh |\la|}}d\la.
\end{align}
In the special case of the Heisenberg group $\Hn$ one has $m=2n$, $k=1$, and \eqref{pipposemprepiupiccoloH} gives back the famous formula  independently found by Hulanicki \cite{Hu} and Gaveau \cite{Gav}. We mention here that in \cite{Fo} Folland proved the existence of the heat kernel in any Carnot group, but of course in such generality one does not have an explicit representation such as \eqref{pipposemprepiupiccoloH}. 

\begin{remark}\label{R:s=1}
The reader should note that if in \eqref{parBfs00} we formally set $s = 1$ and $y = 0$ we perfectly recover the Hulanicki-Gaveau formula \eqref{pipposemprepiupiccoloH} when the pole 
$(\zeta,\tau) = (0,0)$!
\end{remark}

Similarly to what we did in Section \ref{S:met}, in the present section we first provide in Theorem \ref{T:heat} a totally self-contained and elementary proof of the construction of the heat kernel for the limiting case \eqref{heatG}. We do this not just for groups of Heisenberg type, but in the more general framework of a Carnot group $\bG$ of step two. Of course the result per se is not new, as Cygan established it in \cite{Cy}, but our proof is. Although the relevant PDE \eqref{heatG} is not hybrid in the sense specified in the opening of this paper, the motivation for including here the construction of its heat kernel is twofold: (i) on one hand it allows to present our approach to Theorem \ref{T:ext0} in a significant, yet simplified setting; (ii) on the other hand we feel that our self-contained proof will be of interest to workers in analysis and PDEs who are not directly familiar with those important and deep tools, such as e.g. group representation theory, Laguerre calculus, complex Hamiltonians or a priori ansatz, which in one form or another have entered the previous related works such as \cite{Hu, Gav, Cy, BGG, Ran, Kli, B, CT, LP, BR, MM}. 


\subsection{The generalised harmonic oscillator with a complex drift}\label{S:hocomplex}

In what follows given a number $n\in \mathbb N$ we denote by
$S\in M_{n\times n}(\R)$ a skew-symmetric matrix, i.e., we assume $S^\star = - S$. We intend to solve the Cauchy problem for the following generalised harmonic oscillator with a complex drift
\begin{equation}\label{hocom}
\begin{cases}
\Delta_z  \tilde v  -  |S z|^2\  \tilde v + 2  i \langle S z, \nabla_z  \tilde v\rangle  - \p_t  \tilde v = 0,
\\
\tilde v(z,0) = \tilde v_0(z) \ \ \ \ \ \ \ \ \ \ \ \ \ z\in \R^n,\ t>0,
\end{cases}
\end{equation}
where $\tilde v_0$ is suitably chosen. 
We will need the following lemma that allows to eliminate the complex drift from \eqref{hocom}.
\begin{lemma}\label{L:youngandsmart}
Suppose that $v$ and $\tilde v$ are connected by the relation
\begin{equation}\label{drift}
v(z,t) = \tilde v(e^{-2 i t S} z,t).
\end{equation}
Then, $\tilde v$ is a solution to the PDE in \eqref{hocom} if and only if $v$ is a solution to the equation
\[
\Delta_z v  - |S z|^2 v  - \p_t v = 0.
\]
\end{lemma}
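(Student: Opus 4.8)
The plan is to read \eqref{drift} as a time-dependent, \emph{complex-linear} change of the spatial variable and to track how each term of the harmonic oscillator for $v$ transforms under it. Set $R(t) = e^{-2\pi i t J(\la)}$ and $w = R(t) z$, so that $v(z,t) = \tilde v(w,t)$. Before differentiating I would record the three algebraic facts that drive everything. First, since $J(\la)$ is real and skew-symmetric by \eqref{kap}, taking the transpose entrywise (without conjugation) gives $R(t)^{\top} = e^{2\pi i t J(\la)} = R(t)^{-1}$; hence $R(t)$ is \emph{complex-orthogonal}, $R(t)^{\top} R(t) = R(t) R(t)^{\top} = I$. Second, $R$ solves the matrix ODE $R'(t) = -2\pi i J(\la) R(t)$. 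Third, because $A(\la) = -J(\la)^2$ is a power series in $J(\la)$, it is symmetric and commutes with $R(t)$.

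Next I would carry out the chain rule. Writing $w_k = \sum_j R_{kj}(t) z_j$ one gets $\partial_{z_j} v = \sum_k R_{kj}\,(\partial_{w_k}\tilde v)$, and summing the second derivatives,
\[
\Delta_z v = \sum_{k,l}\big(R R^{\top}\big)_{kl}\,(\partial_{w_k}\partial_{w_l}\tilde v) = (\Delta_w \tilde v)(w,t),
\]
where the complex-orthogonality $R R^{\top} = I$ collapses the sum to the Laplacian in $w$. For the time derivative, the ODE for $R$ gives $\partial_t w = R'(t) z = -2\pi i J(\la) w$, so
\[
\partial_t v = -2\pi i \langle J(\la) w, \nabla_w \tilde v\rangle + (\partial_t \tilde v)(w,t),
\]
which is precisely where the complex drift is produced. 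Finally, the potential term is left invariant: since $A(\la)$ is symmetric, commutes with $R(t)$, and $R(t)^{\top} R(t) = I$,
\[
\langle A(\la) w, w\rangle = \langle R(t)^{\top} A(\la) R(t) z, z\rangle = \langle A(\la) z, z\rangle.
\]

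Substituting these three identities into the equation $\Delta_z v - \pi^2 \langle A(\la) z, z\rangle v - \partial_t v = 0$ and rearranging, I obtain at the point $w = R(t) z$ exactly
\[
\Delta_w \tilde v + 2\pi i \langle J(\la) w, \nabla_w \tilde v\rangle - \pi^2 \langle A(\la) w, w\rangle \tilde v - \partial_t \tilde v = 0,
\]
i.e. the pde in \eqref{cp3} for $\tilde v$. Because $R(t)$ is invertible for every fixed $t$, as $z$ ranges over its domain the point $w$ ranges over the full domain, so the implication runs in both directions and the equivalence follows; reading the same computation backwards gives the converse.

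The point that needs care — rather than any heavy computation — is that the change of variable \eqref{drift} is genuinely complex: for real $z$ the vector $w = e^{-2\pi i t J(\la)} z$ is complex, so $\langle\cdot,\cdot\rangle$ above must be understood as the bilinear extension and, more importantly, one must make sense of evaluating $\tilde v$ at complex arguments and of applying the chain rule there. I expect this to be the main obstacle, and I would resolve it by invoking the spatial (real-)analyticity of solutions of \eqref{cp3} — the coefficients being polynomials — which lets $\tilde v(\cdot,t)$ extend holomorphically to a complex neighborhood, so that $\tilde v(e^{-2\pi i t J(\la)} z, t)$ is well defined and the holomorphic chain rule legitimizes every step above.
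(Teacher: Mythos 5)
Your proof is correct and follows essentially the same route as the paper's: the chain rule under the change of variable $z \mapsto e^{-2\pi i t J(\la)}z$, complex-orthogonality $R(t)^{\top}R(t)=I_m$ from the skew-symmetry of $J(\la)$ to preserve $\Delta_z$ and the potential $\langle A(\la)z,z\rangle$, and the matrix ODE for $R(t)$ producing exactly the drift $2\pi i\langle J(\la)\,\cdot\,,\nabla\tilde v\rangle$. Your closing remark about justifying the evaluation of $\tilde v$ at complex arguments via holomorphic extension is a point the paper passes over silently, and is a welcome extra precision rather than a deviation.
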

\begin{proof}
Let $\tilde v$ be a solution to the PDE in \eqref{hocom}. We note that by the skew-symmetry of $S$ we know that 
$$e^{-2 i t S^\star} e^{-2 i t S} = I_n.$$ 
Using this observation the reader can verify by a direct computation that 
$$
\Delta v(z,t) = \Delta \tilde v(e^{- 2 i t S} z,t).
$$
It is also clear from \eqref{drift} that
\[
\p_t  v(z,t) = - 2 i \langle S z,\nabla \tilde v(e^{-2 i t S} z,t)\rangle + \p_t \tilde v(e^{-2 i t S} z,t).
\]
Combining the latter two equations we easily reach the desired conclusion.

\end{proof}

Returning to the Cauchy problem \eqref{hocom} the following is the main result of this subsection.

\begin{proposition}\label{P:Pippetto}
Let
\begin{equation}\label{wowQ}
\mathscr Q(z,\zeta,t) =\frac{e^{i\langle Sz,\zeta\rangle}}{(4\pi t)^{\frac n2}} \left(\det j\left(2t\sqrt{S^\star S}\right)\right)^{\frac{1}{2}} e^{-\frac{1}{4t} \langle j\left(2t\sqrt{S^\star S}\right) \cosh\left(2t\sqrt{S^\star S}\right) (z-\zeta), z-\zeta\rangle}
\end{equation}  
Then, for any $\tilde v_0\in C(\Rn)$ such that
$$
e^{-\frac{1}{2}\left\langle \sqrt{S^\star S} z , z \right\rangle}\tilde v_0(z) \in L^\infty(\Rn),
$$
the function
\begin{equation}\label{mellettoo}
\tilde v(z,t) = \int_{\Rn} \mathscr Q(z,\zeta,t) \tilde v_0(\zeta) d\zeta
\end{equation}
solves \eqref{hocom}.
\end{proposition}
\begin{proof}

It is clear that using Lemma \ref{L:youngandsmart} the Cauchy problem \eqref{hocom} is transformed into 
\begin{equation}\label{hocom2}
\begin{cases}
\Delta_z   v  -  |S z|^2\  v   - \p_t   v = 0,
\\
v(z,0) = \tilde v_0(z) \ \ \ \ \ \ \ \ \ \ \ \ \ z\in \R^n,\ t>0,
\end{cases}
\end{equation}
for the function $v$ defined by \eqref{drift}. If we now define $D = \sqrt{S^\star S} = \sqrt{-S^2}$, then clearly $D\ge 0$,   $D^\star = D$, and $|Sz|^2 = |Dz|^2$. We can thus re-write \eqref{hocom2} as follows 
\[
\begin{cases}
\Delta_z   v  -  |D z|^2\  v   - \p_t   v = 0,
\\
v(z,0) = \tilde v_0(z) \ \ \ \ \ \ \ \ \ \ \ \ \ z\in \R^n,\ t>0.
\end{cases}
\]
According to Proposition \ref{P:Pippo} the function
\[
v(z,t) = \int_{\Rn} \mathscr M(z,\zeta,t) \tilde v_0(\zeta) d\zeta,
\]
with $D =  \sqrt{-S^2}$ and $\mathscr M(z,\zeta,t)$ as in \eqref{Pippo}, solves the latter problem. Undoing \eqref{drift} we have proved that the function defined by 
\begin{equation}\label{melletto}
\tilde v(z,t) = \int_{\Rn} \mathscr M(e^{2 i t S} z,\zeta,t) \tilde v_0(\zeta) d\zeta
\end{equation}
solves \eqref{hocom}.
We next want to further simplify the expression \eqref{melletto}. Keeping in mind the explicit expression \eqref{Pippo} for $\mathscr M$, we  have
\begin{align}\label{Pippotto}
 \mathscr M(e^{2 i t S} z,\zeta,t) & = (4\pi t)^{-\frac n2} \sqrt{\det j(2tD)} \exp\bigg\{-\frac{1}{4t} \bigg(\langle j(2tD) \cosh(2tD) e^{2 i t S} z, e^{2 i t S} z\rangle
\\
& + \langle j(2tD) \cosh(2tD) \zeta,\zeta\rangle - 2 \langle j(2tD)\ e^{2 i t S} z,\zeta \rangle\bigg)\bigg\},
\notag
\end{align}
We now observe that, since $S$ commutes with any even analytic function of $D = \sqrt{-S^2}$, we have in particular
\begin{equation}\label{ollaoppi}
\langle j(2tD) \cosh(2 t D) e^{2it S}z,e^{2i tS}z\rangle = \langle j(2tD) \cosh(2 t D) z,z\rangle, 
\end{equation}
as well as
\begin{align}\label{eiS}
e^{2i tS}&=\sum_{k=0}^{\infty}\frac{(2it S)^{2k}}{(2k)!} + \sum_{k=0}^{\infty}\frac{(2i tS)^{2k+1}}{(2k+1)!}\\
&=\sum_{k=0}^{\infty}\frac{(-4t^2 S^2)^{k}}{(2k)!} +2it S \sum_{k=0}^{\infty}\frac{(-4t^2 S^2)^{k}}{(2k+1)!}\nonumber\\
&=\sum_{k=0}^{\infty}\frac{(2tD)^{2k}}{(2k)!} +2i t S \sum_{k=0}^{\infty}\frac{(2tD)^{2k}}{(2k+1)!}\nonumber\\
&=\cosh(2tD) +2i t \left(j(2tD)\right)^{-1} S.\nonumber
\end{align}
We thus find from \eqref{eiS} 
\begin{equation}\label{ollaoppi2}
- 2 \langle j(2tD)\ e^{2 i t S} z,\zeta \rangle = - 2 \langle j(2tD)\ \cosh(2tD) z,\zeta \rangle - 4 i t \langle Sz,\zeta\rangle.
\end{equation}
Replacing now \eqref{ollaoppi} and \eqref{ollaoppi2} in \eqref{Pippotto}, we finally obtain
\begin{align*}
 \mathscr M(e^{2 i t S} z,\zeta,t) & = (4\pi t)^{-\frac n2} \sqrt{\det j(2tD)} e^{i\langle Sz,\zeta\rangle} \exp\bigg\{-\frac{1}{4t} \bigg(\langle j(2tD) \cosh(2tD) z, z\rangle
\\
& + \langle j(2tD) \cosh(2tD) \zeta,\zeta\rangle - 2 \langle j(2tD)\ \cosh(2tD) z,\zeta \rangle\bigg)\bigg\}\\
&= \frac{e^{i\langle Sz,\zeta\rangle}}{(4\pi t)^{\frac n2}} \sqrt{\det j(2tD)} \exp\bigg\{-\frac{1}{4t}\langle j(2tD) \cosh(2tD) (z-\zeta), z-\zeta\rangle\bigg\}\\
&=\mathscr Q(z,\zeta,t).
\end{align*}
This concludes the proof.

\end{proof}

It is clear from the previous proof that the kernel $\mathscr Q(z,\zeta,t)$ is equal to $\mathscr M(e^{2 i t S} z,\zeta,t)$ with $\mathscr M$ given by \eqref{Pippo} and with $D=\sqrt{S^\star S}$. Using the commutation property in \eqref{ollaoppi} we then deduce from \eqref{intpippo} that
\begin{align}\label{intpippetto}
&\int_{\R^n} \mathscr Q(z,\zeta,t) d\zeta\\
&= \frac{1}{\sqrt{\det \cosh 2t\sqrt{S^\star S}}}\exp\bigg\{-\frac{1}{4t}\langle j(2t\sqrt{S^\star S}) \left(\cosh 2t\sqrt{S^\star S} - \cosh^{-1} 2t\sqrt{S^\star S}\right)\ z, z\rangle \bigg\}.\notag
\end{align}
This equation will be used in the proof of Theorem \ref{T:heat} below. 


\subsection{Gaveau, Hulanicki and Cygan met Ornstein and Uhlenbeck}\label{S:2}

Henceforth, we denote by $\bG$ a Carnot group of step two and we let $\bg = V_1 \oplus V_2$ indicate its Lie algebra, with inner product $\langle \cdot,\cdot\rangle$. Recall that the step two assumption means that $[V_1,V_1] = V_2$ and that $[V_1,V_2] = \{0\}$. We let $m = \operatorname{dim}(V_1)$, $k = \operatorname{dim}(V_2)$, and we fix orthonormal basis $\{e_1,...,e_{m}\}$ and $\{\ve_1,...,\ve_k\}$ for $V_1$ and $V_2$ respectively. For points $z\in V_1$ and $\sigma\in V_2$ we will use either one of the representations $z = \sum_{j=1}^{m} z_j e_j$, $\sigma = \sum_{\ell=1}^k \sigma_\ell \ve_\ell$, or also $z = (z_1,...,z_{m})$, $\sigma = (\sigma_1,...,\sigma_k)$. Whenever convenient, we routinely identify $V_1 \cong \Rm$ and $V_2 \cong \R^k$ and the points $g, g'\in \bG$ with their logarithmic coordinates $(z,\sigma), (\zeta,\tau)\in \R^{m+k}$ respectively. Recall that the Kaplan mapping $J: V_2 \to \operatorname{End}(V_1)$ is defined by 
\begin{equation}\label{kap}
\langle J(\sigma)z,\zeta\rangle = \langle [z,\zeta],\sigma\rangle = - \langle J(\sigma)\zeta,z\rangle.
\end{equation}
Clearly, $J(\sigma)^\star = - J(\sigma)$, and one has $\langle J(\sigma)z,z\rangle = 0$. Moreover, by the bracket generating assumption $[V_1,V_1]=V_2$, we know that the map $J$ is injective. 
Via the Baker-Campbell-Hausdorff formula 
\begin{equation}\label{bch}
\exp(z+\sigma)  \exp(\zeta+\tau) = \exp{\left(z + \zeta + \sigma +\tau + \frac{1}{2}
[z,\zeta]\right)}
\end{equation}
the map \eqref{kap} identifies the non-Abelian multiplication law in $\bG$
$$g\circ g' = (z +\zeta,\sigma + \tau + \frac 12 \sum_{\ell=1}^{k} \langle J(\ve_\ell)z,\zeta\rangle\ve_\ell).$$
For future use we observe that
\begin{equation}\label{gl2}
(g')^{-1}\circ g = (z -\zeta,\sigma - \tau + \frac 12 \sum_{\ell=1}^{k} \langle J(\ve_\ell)z,\zeta\rangle\ve_\ell).
\end{equation}
The map \eqref{kap} also induces a  complex geometry in the Lie group. This becomes particularly transparent in the special case of groups of Heisenberg type for which $J(\sigma)^2 = - |\sigma|^2 I_{V_1}$, see the seminal works \cite{Ka} and \cite{CDKR}. In general,  for the mapping 
\begin{equation}\label{A}
A(\sigma) \overset{def}{=} J^\star(\sigma) J(\sigma) = - J(\sigma)^2,
\end{equation}
we have $\langle A(\sigma) z,z\rangle = |J(\la)z|^2\ge 0$, and therefore it defines a symmetric nonnegative element of $\operatorname{End}(V_1)$ for every $\sigma \in V_2$. Consequently, the matrix $\sqrt{A(\sigma)}$ is well-defined.
\begin{remark}\label{remexp}
Notice that $\operatorname{Ker} A(\la) = \operatorname{Ker} J(\la)$ can have positive dimension. Nevertheless, the injectivity of $J$ ensures that $A(\la)$ is not the null endomorphism for every $\la\neq 0$. Moreover, being $J(\la)$ skew-symmetric, the dimension of the range of $A(\la)$ has to be at least two. Since the linearity of $J$ allows us to write $\sqrt{A(\la)}=|\la|\sqrt{A(\la/|\la|)}$, one can deduce that there exists $k_0>0$ such that $\sqrt{A(\la)}$ has at least two eigenvalues bigger than $k_0|\la|$. This implies that  
\begin{equation}\label{expdecaybound}
\det j(\sqrt{A(\la)})\leq \left(j(k_0|\la|)\right)^2 =\left(\frac{2k_0|\la|}{e^{k_0|\la|}-e^{-k_0|\la|}}\right)^2.
\end{equation} 
\end{remark}
If for $j=1,...,m$ we define left-invariant vector fields by the Lie rule $X_j u(g) = \frac{d}{ds} u(g \circ\exp s e_j)\bigg|_{s=0}$,
then by \eqref{bch} one obtains in the logarithmic coordinates $(z,\sigma)$ 
\begin{equation}\label{Xi}
X_j  = \p_{z_j} + \frac 12 \sum_{\ell=1}^k \langle J(\ve_\ell)z,e_j\rangle \partial_{\sigma_\ell} = \p_{z_j} + \frac 12 \sum_{\ell=1}^k \sum_{i=1}^m  z_i \langle J(\ve_\ell)e_i,e_j\rangle \partial_{\sigma_\ell}.
\end{equation}
Although we will not make an explicit use of this fact, we note that \eqref{Xi} implies the following commutation relation 
\[
[X_i,X_{j}] = \sum_{\ell=1}^k \langle J(\ve_\ell) e_i,e_j\rangle \p_{\sigma_\ell}.
\]
Given a function $f\in C^1$ we will indicate by $\nabla_H f = (X_1f,...,X_{m} f)$ its horizontal gradient, and set $|\nabla_H f| = (\sum_{j=1}^{m} (X_j f)^2)^{1/2}$. The horizontal Laplacian generated by the orthonormal basis $\{e_1,...,e_{m}\}$ of $V_1$ is the second-order differential operator on $\bG$ defined by 
$$\mathscr L f = \sum_{j=1}^{m} X_j^2 f,$$
where $X_1,...,X_{m}$ are given by \eqref{Xi}. A computation gives
\begin{equation}\label{sl}
\mathscr L = \Delta_z + \frac 14 \sum_{\ell,\ell' = 1}^k \langle J(\ve_\ell)z,J(\ve_{\ell'})z\rangle \p_{\sigma_\ell}\p_{\sigma_{\ell'}} + \sum_{\ell = 1}^k \Theta_\ell \p_{\sigma_\ell},
\end{equation}
where $\Delta_z$ represents the standard Laplacian in the variable $z = (z_1,...,z_m)$, and
\begin{equation}\label{thetaell0}
\Theta_\ell = \sum_{s=1}^m \langle J(\ve_\ell)z,e_s\rangle \p_{z_s}.
\end{equation}

For $f\in \mathscr S$ (recall that we are thinking of $\bG \cong \R^m\times \R^k$) we now consider the Cauchy problem
\begin{equation}\label{cp}
\begin{cases}
\mathscr L u - \p_t u = 0\ \ \ \ \ \ \ \ \text{in}\ \bG\times (0,\infty),
\\
u(g,0) = f(g) \ \ \ \ \ \ \ \ \ \ \ \ \ g\in \bG.
\end{cases}
\end{equation}

The aim of this section is to provide a new simple proof of the following classical result. 

\begin{theorem}[Gaveau-Hulanicki-Cygan]\label{T:heat}
The heat kernel in $\bG\times (0,\infty)$ is given by the formula
\begin{align}\label{ournucleo}
& q(g,g',t) = 2^k (4\pi t)^{-(\frac m2 + k)}  \int_{\R^k} e^{\frac{i}{t}\left(\langle \tau-\sigma,\la\rangle +\frac{1}{2}\langle J(\la)\zeta,z\rangle \right)} \left(\det j(\sqrt{A(\la)})\right)^{1/2} 
\\
& \times \exp\bigg\{-\frac{1}{4t}\langle j(\sqrt{A(\la)}) \cosh \sqrt{A(\la)} (z-\zeta),z-\zeta\rangle \bigg\}
d\la,\notag
\end{align}
where $g=(z,\sigma)$ and $g' = (\zeta,\tau)$.
\end{theorem}
It is appropriate to mention that although the integral representations in the above cited literature appear in different forms with respect to \eqref{ournucleo}, they are in fact equivalent to it. The advantage of our presentation is that it is particularly transparent and is easily applicable for instance to delicate questions in geometric measure theory treated in our work \cite{GTbbmd}.
\begin{proof}[Proof of Theorem \ref{T:heat}]
Fix $f\in \mathscr S$. To solve \eqref{cp} we start from the expression \eqref{sl} of the horizontal Laplacian, keeping also \eqref{thetaell0} in mind. We identify $\bG$ with $\R^m\times \R^k$ and, denoting for $\la\in \R^k$
$$
\hat u(z,\la,t) = \int_{\R^k} e^{-2\pi i\langle\la,\sigma\rangle} u(z,\sigma,t) d\sigma,
$$
the partial Fourier transform of $u$ with respect to the central variable $\sigma\in \R^k$, we obtain from \eqref{sl}
\begin{equation}\label{cp2}
\begin{cases}
\Delta_z \hat u  - \pi^2 \sum_{\ell,\ell' = 1}^k \la_\ell \la_{\ell'}\langle J(\ve_\ell)z,J(\ve_{\ell'})z\rangle  \hat u + 2 \pi i \sum_{\ell = 1}^k \la_\ell \Theta_\ell  \hat u - \p_t \hat u = 0,
\\
\hat u(z,\la,0) = \hat f(z,\la) \ \ \ \ \ \ \ \ \ \ \ \ \ z\in \R^m,\ \la \in \R^k,\ t>0.
\end{cases}
\end{equation}
Now, it is not difficult to recognise that
\begin{equation}\label{mush1}
\sum_{\ell,\ell' = 1}^k \la_\ell \la_{\ell'}\langle J(\ve_\ell)z,J(\ve_{\ell'})z\rangle = |J(\la)z|^2.
\end{equation}
Furthermore, we obtain from \eqref{thetaell0}
\begin{equation}\label{mush2}
\sum_{\ell = 1}^k \la_\ell \Theta_\ell  \hat u = \sum_{\ell = 1}^k \la_\ell  \sum_{s=1}^m \langle J(\ve_\ell)z,e_s\rangle \p_{z_s} \hat u = \sum_{s=1}^m \langle J(\la)z,e_s\rangle \p_{z_s} \hat u = \langle J(\la) z,\nabla_z \hat u\rangle.
\end{equation}
Using the identities \eqref{mush1}, \eqref{mush2} we can thus write \eqref{cp2} in the form
\begin{equation}\label{cp3}
\begin{cases}
\Delta_z \tilde v  - \pi^2 |J(\la)z|^2\ \tilde v + 2 \pi i \langle J(\la) z, \nabla_z \tilde v\rangle  - \p_t \tilde v = 0,
\\
\tilde v(z,0) = \hat f(z,\la) \ \ \ \ \ \ \ \ \ \ \ \ \ z\in \R^m,\ t>0,
\end{cases}
\end{equation}
where for $\la\in \R^k$ fixed, we have let $\tilde v(z,t) = \hat u(z,\la,t)$. To solve problem \eqref{cp3} we now apply Proposition \ref{P:Pippetto} with the choice of the skew-symmetric matrix $S = \pi J(\la)$, so that the symmetric matrix $D = \sqrt{-S^2}$ is presently given by $D = \pi \sqrt{A(\la)} = \pi \sqrt{-J(\la)^2}$ (we warn the reader that the role of the dimension $n$ in Proposition \ref{P:Pippetto} is now taken by the dimension $m$ of the first layer $V_1$). Formula \eqref{mellettoo} allows to conclude that
$$
\hat u(z,\la,t) = \int_{\Rm} \mathscr Q( z,\zeta,t) \hat f(\zeta,\la) d\zeta
$$
with the kernel $\mathscr Q$ as in \eqref{wowQ}. We can now argue as in the proof of Theorem \ref{T:BGpar}. First, by exploiting \eqref{intpippetto}, we can ensure that $\hat u(z,\la,t)\to \hat f(z,\la)$ in $L^1(\R^k,d\lambda)$ as $t\to 0^+$. Then, we can conclude that the function
$$
u(z,\sigma,t) = \int_{\Rm} \int_{\R^k}  \int_{\R^k} e^{2\pi i\langle \sigma-\tau,\la\rangle} \mathscr Q( z,\zeta,t)  f(\zeta,\tau) d\la   d\zeta d\tau
$$
solves the Cauchy problem \eqref{cp}. The desired heat kernel on the group $\bG$ is thus given by
\begin{align*}
 q((z,\sigma),(\zeta,\tau),t) &= \int_{\R^k} e^{2\pi i\langle \sigma-\tau,\la\rangle} \mathscr Q(z,\zeta,t) d\lambda\\
&=(4\pi t)^{-\frac m2} \int_{\R^k} e^{2\pi i\langle \sigma-\tau,\la\rangle} e^{i\pi \langle J(\lambda)z,\zeta\rangle} \left(\det j(2\pi t \sqrt{A(\la)})\right)^{1/2} \times \\
&\times \exp\bigg\{-\frac{1}{4t}\langle j(2\pi t\sqrt{A(\la)}) \cosh \left(2\pi t\sqrt{A(\la)}\right) (z-\zeta),z-\zeta\rangle \bigg\} d\lambda
\end{align*}
where we have used the expression \eqref{wowQ}. Making the change of variable $(-2\pi t \la) \to \la$ and exploiting the linearity and the skew-symmetry of the mapping $J(\cdot)$, we finally obtain
\begin{align}\label{pipposemprepiupiccolo}
& q((z,\sigma),(\zeta,\tau),t) = 2^k (4\pi t)^{-(\frac m2 + k)}  \int_{\R^k} e^{\frac{i}{t}\left(\langle \tau-\sigma,\la\rangle +\frac{1}{2}\langle J(\la)\zeta,z\rangle \right)} \left(\det j(\sqrt{A(\la)})\right)^{1/2} 
\\
& \times \exp\bigg\{-\frac{1}{4t}\langle j(\sqrt{A(\la)}) \cosh \sqrt{A(\la)} (z-\zeta),z-\zeta\rangle \bigg\}
d\la.\notag
\end{align}
From \eqref{expdecaybound} it is easy to verify that the integral in \eqref{pipposemprepiupiccolo} is finite, and the kernel $q((z,\sigma),(\zeta,\tau),t)$ is a smooth function. This completes the proof of Theorem \ref{T:heat}.

\end{proof}

For the sake of completeness we close this brief subsection by showing how to recover the Gaveau-Hulanicki formula \eqref{pipposemprepiupiccoloH} from Theorem \ref{T:heat}. We recall that a Carnot group of step two $\bG$ is said of Heisenberg type if for every $\la\in V_2$ the mapping $A(\la)$ in \eqref{A} satisfies $A(\la) = - J(\la)^2 = |\la|^2 I_m$. Therefore, $\sqrt{A(\la)} = |\la| I_m$, and we have 
$$j(\sqrt{A(\la)}) = j(|\la| I_m) = j(|\la|) I_m,$$
and also
\[
 j(\sqrt{A(\la)}) \cosh \sqrt{A(\la)}  = j(|\la|) \cosh |\la| I_m  = \frac{|\la|}{\tanh |\la|} I_m.
\]
We thus obtain from \eqref{ournucleo}
\begin{align*}
& q((z,\sigma),(\zeta,\tau),t) = \frac{2^k}{\left(4\pi t\right)^{\frac m2+k}}\int_{\R^k} \left(\frac{|\la|}{\sinh |\la|}\right)^{\frac m2} e^{\frac it (\langle\tau -\sigma,\la\rangle + \frac 12\langle J(\la)\zeta,z\rangle)} e^{- \frac{|z-\zeta|^2}{4t} \frac{|\la|}{\tanh |\la|}}d\la
\end{align*}
which coincides with the expression recalled in \eqref{pipposemprepiupiccoloH}.


\subsection{Proof of Theorem \ref{T:ext0}}\label{S:ext}

In this section we finally turn to the proof of Theorem \ref{T:ext0}. As we have already observed, we begin from the crucial observation that, in a group of Heisenberg type $\bG$, the relevant heat equation associated with \eqref{defextLs} is \eqref{hybG}, with the variable $w$ running in the space with fractal dimension $\R^{2(1-s)}$. Henceforth, to continue the analysis we proceed formally and treat the number $2(1-s)$ as if it were an integer. In this way we will arrive to a specific candidate for a heat kernel in the form \eqref{parBfs00}. Only at that point we will rigorously justify our computations and complete the proof. 

We begin by observing that the assumption $J(\la)^2 = - |\la|^2 I_m$ implies in particular that $\langle J(\ve_\ell)z,J(\ve_{\ell'}) z\rangle = |z|^2 \delta_{\ell \ell'}$ for every $z\in V_1$ and every $\ell, \ell'\in \{1,...,k\}$ (see, e.g, \cite[Prop. 2.9]{Gems}). Inserting this information in \eqref{sl} we conclude that in a group of Heisenberg type the horizontal Laplacian is given by 
\begin{equation}\label{Lhtipo}
\mathscr L = \Delta_z + \frac{|z|^2}{4} \Delta_\sigma + \sum_{\ell = 1}^k \Theta_\ell \p_{\sigma_\ell}.
\end{equation}
Combining \eqref{hybG} and \eqref{Lhtipo}, it is clear that we presently want to solve the Cauchy problem in $\Rm\times \R^k\times\R^{2(1-s)} \times (0,\infty)$
\begin{equation}\label{cauchyone}
\begin{cases}
\Delta_z U + \Delta_w U+ \frac{|z|^2 + |w|^2}{4} \Delta_\sigma U + \sum_{\ell = 1}^k \Theta_\ell \p_{\sigma_\ell} U - \p_t U = 0,
\\
U(z,\sigma,w,0) = u_0(z,\sigma,w),
\end{cases} 
\end{equation}
where the function $u_0$ is assigned in $\mathscr S \left(\Rm\times \R^k\times\R^{2(1-s)}\right)$. Proceeding as in the Section \ref{S:2} we now take a partial Fourier transform of the problem \eqref{cauchyone} with respect to the variable $\sigma\in \R^k$. Denoting by $
\hat U(z,\la,w,t) = \int_{\R^k} e^{-2\pi i\langle\la,\sigma\rangle} U(z,\sigma,w,t) d\sigma$, and using the identity \eqref{mush2}, we obtain from \eqref{cauchyone}
$$\begin{cases}
(\Delta_z + \Delta_w)\hat U  - \pi^2 |\la|^2  (|z|^2+|w|^2)  \hat U + 2 \pi i \langle J(\la) z, \nabla_z \hat U\rangle - \p_t \hat U = 0,
\\
\hat U(z,\la,w,0) = \hat u_0(z,\la,w) \ \ \ \ \ \ \ \ \ \ \ \ \ (z,\la,w)\in \R^m\times \R^k \times\R^{2(1-s)},\ t>0.
\end{cases}$$
At this point we make critical use of the elimination of the complex drift transformation introduced in Lemma \ref{L:youngandsmart}. For $\la\in \R^k$, we define
$$
V(z,w,t) = \hat U(e^{-2\pi i t J(\la)} z, \la, w,t).
$$
We leave it to the reader to verify that, similarly to Lemma \ref{L:youngandsmart}, the function $V$ now solves the problem
$$\begin{cases}
(\Delta_z + \Delta_w) V  - \pi^2 |\la|^2 (|z|^2+|w|^2) V  - \p_t V = 0,
\\
V(z,w,0) =  \tilde U(z,w,0).
\end{cases}$$
According to Proposition \ref{P:Pippo}, if we denote by $\mathscr M$ the kernel in \eqref{Pippo} with the choice of $D = \pi |\la| I_{m+2(1-s)}$, we deduce that $V$ is given by
$$
V(z,w,t)=\int_{\R^m\times\R^{2(1-s)}} \mathscr M ((z,w),(\zeta,w'),t) \tilde U(\zeta,w',0) d\zeta dw'
$$
which we can rewrite as
$$
\hat U(z,\la,w,t)=\int_{\R^m\times\R^{2(1-s)}} \mathscr M ((e^{2\pi i t J(\la)}z,w),(\zeta,w'),t) \hat u_0(\zeta,\la,w') d\zeta dw'.
$$
By taking the inverse Fourier transform we then have 
\begin{align}\label{solqq}
&U(z,\sigma,w,t)= \\
&=\int_{\R^m\times\R^k\times\R^{2(1-s)}} \int_{\R^k} e^{2\pi i \left\langle \sigma-\tau,\lambda\right\rangle} \mathscr M ((e^{2\pi i t J(\la)}z,w),(\zeta,w'),t) u_0(\zeta,\tau,w') d\lambda d\zeta d\tau dw'.\notag
\end{align}
From \eqref{solqq} we can extract the following heat kernel for the problem \eqref{cauchyone}
$$
Q((z,\sigma,w),(\zeta,  \tau, w'),t)=\int_{\R^k} e^{2\pi i \left\langle \sigma-\tau,\lambda\right\rangle} \mathscr M ((e^{2\pi i t J(\la)}z,w),(\zeta,w'),t) d\lambda.
$$
By arguing as in \eqref{Pippotto}-\eqref{ollaoppi2} (where we have deduced the expression of $\mathscr Q$) and similarly to \eqref{pipposemprepiupiccolo}, we obtain from the explicit expression of $\mathscr M$ that
\begin{align}\label{kerneldelwudoppio}
Q((z,\sigma,w),(\zeta,  \tau, w'),t)&=\frac{2^k}{(4\pi t)^{\frac{m}2 +k +1-s}} \int_{\R^k} e^{\frac it (\langle\tau -\sigma,\la\rangle + \frac 12\langle J(\la)\zeta,z\rangle)}   \left(\frac{|\la|}{\sinh |\la|}\right)^{\frac m2+1-s}\times \\
&\times e^{-\frac{1}{4t}\left( \frac{|\la|}{\tanh |\la|}\left(|z-\zeta|^2 +|w|^2+|w'|^2\right)-2\frac{|\la|}{\sinh |\la|}\left\langle w,w'\right\rangle\right) } d\la.\notag
\end{align}
If we set $w'=0$ into \eqref{kerneldelwudoppio}, and we keep \eqref{gl2} in mind, as well as the definition of $\mathfrak{q}_{(s)}(\cdot,\cdot,\cdot)$ in \eqref{parBfs00}, we easily realize that
$$
Q((z,\sigma,w),(\zeta,  \tau, 0),t)=Q(((\zeta,\tau)^{-1}\circ (z,\sigma),w),((0,0),0),t)=\mathfrak{q}_{(s)}((\zeta,\tau)^{-1}\circ (z,\sigma),t,|w|). 
$$
The previous identities, together with the already discussed relation between the equations \eqref{hybG} and \eqref{defextLs}, make the function
$$
\mathfrak{q}_{(s)}((z,\sigma),t,y)
$$
the candidate heat kernel of $\mathfrak L_{(s)}-\partial_t$ with pole at the origin for any value of the fractional parameter $s\in (0,1)$. We now rigorously prove this fact, thus establishing Theorem \ref{T:ext0}.

{\allowdisplaybreaks
On the one hand, a straightforward computation shows that $\mathfrak{q}_{(s)}((z,\sigma),t,y)$ does solve the equation \eqref{defextLs} in $\{t>0\}$. We are thus left with understanding in which sense the kernel $\mathfrak{q}_{(s)}((z,\sigma),t,y)$ approaches the delta-function $\delta_{(0,0,0)}$ as $t\to 0^+$. With this in mind we introduce the measure
$$
d \mu \overset{def}{=} \frac{2\pi^{1-s}}{\G(1-s)} y^{1-2s} dy d\sigma dz.
$$
This is a natural measure for the operator $\mathfrak L_{(s)}$ since it is easy to check that $\mathfrak L_{(s)}$ is symmetric (i.e. formally self-adjoint) with respect to $d\mu$. Moreover, the renormalising constant $\frac{2\pi^{1-s}}{\G(1-s)}$ has been chosen in such a way that the following lemma holds true.

\begin{lemma}\label{massuno}
For every $t>0$ we have
$$
\int_{\R^m\times\R^k\times (0,\infty)} \mathfrak{q}_{(s)}((z,\sigma),t,y) d\mu =1.
$$
\end{lemma}
\begin{proof}
For $M>0$ denote
$$
K_M=\left\{(z,\sigma,y)\in\R^m\times\R^k\times (0,\infty)\,:\, |\sigma|\leq M\right\}.
$$
We want to show that
\begin{equation}\label{todo}
\exists \,\underset{M\to +\infty}{\lim} \int_{K_M} \mathfrak{q}_{(s)}((z,\sigma),t,y) d\mu =1.
\end{equation}
It is easy to see that, for any fixed $t>0$,
\begin{align*}
&\int_{K_M} \left|\mathfrak{q}_{(s)}((z,\sigma),t,y)\right| d\mu \\
&\leq \frac{2^k}{(4\pi t)^{\frac{m}2 +k +1-s}}\int_{K_M}\left(\int_{\R^k}\left(\frac{|\la|}{\sinh |\la|}\right)^{\frac m2+1-s} d\la\right) e^{-\frac{|z|^2 +y^2}{4t}} d\mu<\infty.
\end{align*}
By Fubini's theorem we are then free to choose the order of integration. With this in mind, we notice that
\begin{align*}
&\int_{\R^m}\int_0^\infty \mathfrak{q}_{(s)}((z,\sigma),t,y)\left(\frac{2\pi^{1-s}}{\G(1-s)} y^{1-2s}\right)dydz\\
&=\frac{2^k}{(4\pi t)^{k}} \int_{\R^k} e^{- \frac it \langle \sigma,\la\rangle}   \left(\frac{|\la|}{\sinh |\la|}\right)^{\frac m2+1-s} \left(\frac{\tanh |\la|}{|\la|}\right)^{\frac m2+1-s} d\la\\
&=\int_{\R^k} e^{- 2\pi i \langle \sigma,\la\rangle}   \left(\frac{1}{\cosh 2\pi t|\la|}\right)^{\frac m2+1-s} d\la,
\end{align*}
which implies
\begin{align*}
\int_{K_M} \mathfrak{q}_{(s)}((z,\sigma),t,y) d\mu = \int_{\{\sigma\in\R^k\,:\,|\sigma|\leq M\}}\int_{\R^k} e^{- 2\pi i \langle \sigma,\la\rangle}   \left(\frac{1}{\cosh 2\pi t|\la|}\right)^{\frac m2+1-s} d\la d\sigma.
\end{align*}
We now stress that, being the function $f_t(\la):=\left(\frac{1}{\cosh 2\pi t |\la|}\right)^{\frac m2+1-s}$ in the Schwartz class $\mathscr{S}\left(\R^k\right)$, its Fourier transform $\hat{f}_t$ is still in $\mathscr{S}\left(\R^k\right)$ and in particular in $L^1\left(\R^k\right)$. Hence we have
$$
\exists \,\underset{M\to +\infty}{\lim}\int_{K_M} \mathfrak{q}_{(s)}((z,\sigma),t,y) d\mu= \underset{M\to +\infty}{\lim} \int_{\{\,|\sigma|\leq M\}} \hat{f}_t(\sigma) d\sigma= \int_{\R^k} \hat{f}_t(\sigma) d\sigma=1,
$$
where in the last step we have applied the inversion theorem for the Fourier transform and the fact that $f_t(0)=1$. This concludes the proof of \eqref{todo}.

\end{proof}

In order to complete the proof of Theorem \ref{T:ext0} we need to show the validity of the following limiting relation:
\begin{align}\label{finally}
\forall\, \phi \in \mathscr{S}\left(\R^m\times\R^k\times [0,\infty)\right)\,\,\,\mbox{ we have }&\notag\\
\underset{t\to 0^+}{\lim} \int_{\R^m\times\R^k\times (0,\infty)} \mathfrak{q}_{(s)}((z,\sigma),t,y) \phi(z,\sigma,y) d\mu = \,& \phi(0,0,0).
\end{align}
For any fixed $\phi$ as in \eqref{finally}, let us introduce the functions $g_0,\, g_t:\R^k\to \R$ defined, for any $t>0$, by
$$
g_0(\lambda)=\hat{\phi}(0,\lambda,0)
$$
and
\begin{align*}
g_t(\lambda)=&\frac{2\pi^{1-s}}{\G(1-s)(4\pi t)^{\frac{m}2 +1-s}} \left(\frac{2\pi t|\la|}{\sinh 2\pi t|\la|}\right)^{\frac m2+1-s}\times\\
&\times \int_{\R^m\times(0,\infty)} e^{-\frac{|z|^2 +y^2}{4t}\frac{2\pi t|\la|}{\tanh 2\pi t|\la|}} \hat{\phi}(z,\lambda,y) y^{1-2s} dy dz.
\end{align*}
We notice that $g_0,\, g_t\in \mathscr{S}\left(\R^k\right)$. We want to show that
\begin{equation}\label{mainclaim}
g_t \to g_0 \quad \mbox{ in }L^1\left(\R^k\right)\quad\mbox{ as }t\to 0^+.
\end{equation}
In order to prove \eqref{mainclaim} we first notice that, by exploiting the change of variables 
\[
z\mapsto\xi=\frac{z}{\sqrt{4t}}\sqrt{\frac{2\pi t|\la|}{\tanh 2\pi t|\la|}}\ \ \  \text{and}\ \ \  y\mapsto\eta=\frac{y^2}{4t}\frac{2\pi t|\la|}{\tanh 2\pi t|\la|},
\]
we have
\begin{align*}
&g_t(\lambda)=\frac{1}{\G(1-s)\pi^{\frac{m}2}} \left(\frac{1}{\cosh 2\pi t|\la|}\right)^{\frac m2+1-s} \times\\
&\times\int_{\R^m\times(0,\infty)} \eta^{-s}e^{-|\xi|^2-\eta} \hat{\phi}\left(\xi\sqrt{4t \frac{\tanh 2\pi t|\la|}{2\pi t|\la|}},\lambda,\sqrt{4t \eta \frac{\tanh 2\pi t|\la|}{2\pi t|\la|}}\right) d\eta d\xi.
\end{align*}
Therefore, using also the identity $\int_{\R^m\times(0,\infty)} \eta^{-s}e^{-|\xi|^2-\eta}d\eta d\xi=\G(1-s)\pi^{\frac{m}2}$ and recalling that $g_0(\lambda)=\hat{\phi}(0,\lambda,0)$, we can write
\begin{align*}
&\int_{\R^k}\left|g_t(\lambda)-g_0(\lambda)\right|d\lambda=\frac{1}{\G(1-s)\pi^{\frac{m}2}}\int_{\R^k}\left|\left(\frac{1}{\cosh 2\pi t|\la|}\right)^{\frac m2+1-s} \int_{\R^m\times(0,\infty)} \eta^{-s}e^{-|\xi|^2-\eta}\right.\times\\
&\times\hat{\phi}\left(\xi\sqrt{4t \frac{\tanh 2\pi t|\la|}{2\pi t|\la|}},\lambda,\sqrt{4t \eta \frac{\tanh 2\pi t|\la|}{2\pi t|\la|}}\right) d\eta d\xi +\\
&-\left.g_0(\lambda)\int_{\R^m\times(0,\infty)} \eta^{-s}e^{-|\xi|^2-\eta}d\eta d\xi \left(\left(\frac{1}{\cosh 2\pi t|\la|}\right)^{\frac m2+1-s}+1-\left(\frac{1}{\cosh 2\pi t|\la|}\right)^{\frac m2+1-s}\right) \right|d\lambda\\
&\leq \frac{1}{\G(1-s)\pi^{\frac{m}2}}\int_{\R^k}\int_{\R^m\times(0,\infty)}\eta^{-s}e^{-|\xi|^2-\eta}\times\\
&\times\left| \hat{\phi}\left(\xi\sqrt{4t \frac{\tanh 2\pi t|\la|}{2\pi t|\la|}},\lambda,\sqrt{4t \eta \frac{\tanh 2\pi t|\la|}{2\pi t|\la|}}\right)-\hat{\phi}(0,\lambda,0)\right| d\eta d\xi d\lambda +\\
&+ \frac{1}{\G(1-s)\pi^{\frac{m}2}}\int_{\R^k}\int_{\R^m\times(0,\infty)}\eta^{-s}e^{-|\xi|^2-\eta}\hat{\phi}(0,\lambda,0)\left(1- \left(\frac{1}{\cosh 2\pi t|\la|}\right)^{\frac m2+1-s}\right)d\eta d\xi d\lambda.
\end{align*}
Since $\phi\in \mathscr{S}\left(\R^m\times\R^k\times [0,\infty)\right)$, we know that there exists $C_\phi>0$ such that
$$
\left|\hat{\phi}(z,\lambda,y)\right|\leq \frac{C_\phi}{1+|\lambda|^{k+1}}\quad\mbox{ for all }(z,\lambda,y)\in \R^m\times\R^k\times(0,\infty)
$$
and that moreover
$$
\hat{\phi}(z,\lambda,y)\mbox{ is continuous at }(z,y)=(0,0) \,\,\mbox{(the continuity is actually uniform in $\lambda\in\R^k$)}.
$$
The last two properties allow to exploit the dominated convergence theorem and infer that:
$$
\underset{t\to 0^+}{\lim} \int_{\R^k}\int_{\R^m\times(0,\infty)}\eta^{-s}e^{-|\xi|^2-\eta}\hat{\phi}(0,\lambda,0)\left(1- \left(\frac{1}{\cosh 2\pi t|\la|}\right)^{\frac m2+1-s}\right)d\eta d\xi d\lambda=0,
$$
and
\begin{align*}
\underset{t\to 0^+}{\lim}\int_{\R^k}&\int_{\R^m\times(0,\infty)}\eta^{-s}e^{-|\xi|^2-\eta}\times\\
&\times\left| \hat{\phi}\left(\xi\sqrt{4t \frac{\tanh 2\pi t|\la|}{2\pi t|\la|}},\lambda,\sqrt{4t \eta \frac{\tanh 2\pi t|\la|}{2\pi t|\la|}}\right)-\hat{\phi}(0,\lambda,0)\right| d\eta d\xi d\lambda=0.
\end{align*}
In turn, these limiting relations imply that
$$
\int_{\R^k}\left|g_t(\lambda)-g_0(\lambda)\right|d\lambda \underset{t\to 0^+}{\longrightarrow} 0.
$$
This concludes the proof of \eqref{mainclaim}. From \eqref{mainclaim} and the continuity of the Fourier transform from $L^1$ to $L^\infty$, we deduce that
\begin{equation}\label{claimply}
\check{g}_t \to \check{g}_0 \quad \mbox{ in }L^\infty\left(\R^k\right)\quad\mbox{ as }t\to 0^+,
\end{equation}
where we have used the notation $\check{g}$ for the inverse Fourier transform $\check{g}(\sigma)=\int_{\R^k}e^{2\pi i \left\langle \lambda, \sigma\right\rangle} g(\lambda) d\lambda$. Moreover, for any $\sigma_0\in\R^k$, we have $\check{g}_0(\sigma_0)=\phi(0,\sigma_0,0)$ and, by an application of Fubini's theorem, also
\begin{align*}
&\check{g}_t(\sigma_0)=\frac{2\pi^{1-s}}{\G(1-s)(4\pi t)^{\frac{m}2 +1-s}}\times\\
&\times\int_{\R^k} \int_{\R^m\times(0,\infty)} e^{2\pi i\left\langle \sigma_0,\lambda\right\rangle} \left(\frac{2\pi t|\la|}{\sinh 2\pi t|\la|}\right)^{\frac m2+1-s} e^{-\frac{|z|^2 +y^2}{4t}\frac{2\pi t|\la|}{\tanh 2\pi t|\la|}} \hat{\phi}(z,\lambda,y) y^{1-2s} dy dzd\lambda\\
& =\frac{1}{(4\pi t)^{\frac{m}2 +1-s}}\int_{\R^m\times\R^k\times(0,\infty)} \int_{\R^k} e^{-2\pi i\left\langle \sigma-\sigma_0,\lambda\right\rangle} \left(\frac{2\pi t|\la|}{\sinh 2\pi t|\la|}\right)^{\frac m2+1-s}\times\\
&\times e^{-\frac{|z|^2 +y^2}{4t}\frac{2\pi t|\la|}{\tanh 2\pi t|\la|}} \phi(z,\sigma,y) d\lambda d\mu \\
&=\int_{\R^m\times\R^k\times (0,\infty)} \mathfrak{q}_{(s)}((z,\sigma-\sigma_0),t,y) \phi(z,\sigma,y) d\mu.
\end{align*}
Hence, \eqref{claimply} implies that
$$
\int_{\R^m\times\R^k\times (0,\infty)} \mathfrak{q}_{(s)}((z,\sigma-\sigma_0),t,y) \phi(z,\sigma,y) d\mu \underset{t\to 0^+}{\longrightarrow}  \phi(0,\sigma_0,0)\quad\mbox{uniformly for }\sigma_0\in\R^k.
$$
In particular, when applied to $\sigma_0=0$, this completes the proof of \eqref{finally} and finishes the proof of Theorem \ref{T:ext0}.
}


\bibliographystyle{amsplain}

\end{document}